\newtheoremstyle{fact}
     {9pt}
     {9pt}
     {\slshape}
     {}
     {\bfseries}
     {}
     { }
     {\thmname{#1}\thmnumber{ #2.}\thmnote{ \rm (#3)}}
\newtheorem{thm}{Theorem}[section]
\newtheorem{prop}[thm]{Proposition}
\newtheorem{cor}[thm]{Corollary}
\newtheorem{defn}[thm]{Definition}
\newtheorem{rem}[thm]{Remark}
\newcounter{mainthms}
\newtheorem{mainthm}[mainthms]{Theorem}
\theoremstyle{definition}
\newtheorem{examples}[thm]{Examples}
\theoremstyle{fact}
\newtheorem{ftheorem}[thm]{Theorem}
\newtheorem{fproposition}[thm]{Proposition}
\newtheorem{frem}[thm]{Remark}
\renewcommand{\phi}{\varphi}
\renewcommand{\subset}{\subseteq}
\newcommand{\Z}{\mathbb{Z}}
\newcommand{\R}{\mathbb{R}}
\newcommand{\LC}{\mathcal{C}}
\newcommand{\iso}{\cong}
\newcommand{\Aut}{\operatorname{Aut}}
\newcommand{\Hom}{\operatorname{Hom}}
\newcommand{\End}{\operatorname{End}}
\newcommand{\GL}{\operatorname{GL}}
\newcommand{\Homeo}{\operatorname{Homeo}}
\newcommand{\0}{\mathbf{0}}
\newcommand{\toCO}{\xrightarrow{c.o.}}
\newcommand{\tog}{\xrightarrow{g}}
\renewcommand{\1}{\mathbf{1}}
\begin{document}

\title[Decompositions of the Automorphism Group of an LCA Group]{Decompositions of the Automorphism Group of a Locally Compact Abelian Group}
\author{Iian B. Smythe}
\email{ibs24@cornell.edu}
\subjclass[2010]{Primary: 22D45, 22B05. Secondary: 54H11, 18E05, 20K30.}
\address{Department of Mathematics, Cornell University, Ithaca, NY 14853-4201}
\date{October 9, 2011.}

\thanks{I would like to thank the Natural Sciences and Engineering Research Council of Canada, the University of Manitoba, and Cornell University for financial support which has enabled this research.}

\begin{abstract}
	It is well known that every locally compact abelian group $L$ can be decomposed as $L_1 \oplus \R^n$, where $L_1$ contains a compact-open subgroup. In this paper, we use this decomposition to study the topological group $\Aut(L)$ of automorphisms of $L$, equipped with the $g$-topology. We show that $\Aut(L)$ is topologically isomorphic to a matrix group with entries from $\Aut(L_1)$, $\Hom(L_1,\R^n)$, $\Hom(\R^n,L_1)$, and $\GL_n(\R)$, respectively. It is also shown that the algebraic portion of the decomposition is not specific to locally compact abelian groups, but is also true for objects with a well-behaved decomposition in an additive category with kernels.
\end{abstract}

\maketitle

\section{Introduction}

Given a collection of mathematical objects with a notion of isomorphism, it is often of interest to study the self-isomorphisms, or automorphisms, of those objects. In particular, the set of all such automorphisms is a group under composition, and there is an interplay between the structure of this group of automorphisms and the underlying object. Classical examples include permutation groups of sets, which encompasses the whole of group theory, automorphism groups of fields in the context of Galois theory, and groups of diffeomorphisms of smooth manifolds. In the setting of topological spaces, where automorphisms are self-homeomorphisms of a space $X$, it is natural to consider endowing this automorphism group $\Homeo(X)$ with a topology related to that of $X$. If $X$ is locally compact, then $\Homeo(X)$ and its subgroups can be made into topological groups, via the so-called \emph{$g$-topology}, or \emph{Birkhoff topology}, generated by the subbasis consisting of sets of the form
\[
	(C,U) = \{f\in\Homeo(X): f(C)\subset U \text{ and } f^{-1}(C)\subset U\},
\]
where $C$ is a compact subset of $X$, and $U$ an open subset of $X$ \cite{A46}. This is the coarsest refinement of the compact-open topology wherein both composition and inversion are continuous.

When $L$ is a Hausdorff locally compact group, denote by $\Aut(L)$ the group of topological automorphisms of $L$, a closed subgroup of $\Homeo(L)$, endowed with the $g$-topology. In general, $\Aut(L)$ is not locally compact, even in the case where $L$ is a compact abelian group \cite[26.18 (k)]{HR}, which has led many to study conditions under which local compactness holds.  For example, if $L$ is compact, totally disconnected, and nilpotent, then local compactness, and in fact, compactness, of $\Aut(L)$ are equivalent to all Sylow subgroups having finitely many topological generators \cite{M80}. Recent work of Caprace and Monod has shown that if $L$ is totally disconnected, compactly generated and locally finitely generated, then $\Aut(L)$ is locally compact \cite[I.6]{CM11}. It is also known that $\Aut(L)$ is a Lie group provided $L$ is connected and finite dimensional \cite{LW68}. It has been shown that automorphism groups of compact abelian groups are universal for the class of non-archimedean groups in the sense that every non-archimedean group embeds as a topological subgroup of $\Aut(K)$, for some compact abelian $K$; see \cite{M76} and \cite{MS11}.

In the case where $L$ is a locally compact abelian (LCA) group, Levin \cite{L71} gave criterion for local compactness of $\Aut(L)$, provided $L$ contained a discrete subgroup such that the quotient was compact. Levin's analysis utilizes the additional structure of LCA groups afforded to us by their duality theory, and in particular, the following canonical decomposition of such groups.

\begin{ftheorem}[{\cite[24.30]{HR}, \cite[Cor. 1]{AA78}}]\label{canondecomp}
	If $L$ is an LCA group, then $L \iso L_1\oplus\R^n$, where $L_1$ is an LCA group containing a compact-open subgroup. Further, $n$ is uniquely determined, and $L_1$ is determined up to isomorphism.
\end{ftheorem}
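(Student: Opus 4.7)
The natural plan is to reduce to the structure theorem for compactly generated LCA groups, which states that any such group is topologically isomorphic to $\R^a \oplus \Z^b \oplus K$ with $K$ compact. First I would produce a convenient open subgroup of $L$: fix a compact symmetric neighborhood $U$ of $0$ and pass to the subgroup $H = \bigcup_{m \geq 1} U^m$ generated by $U$, which is compactly generated and open. The structure theorem yields $H \iso \R^n \oplus \Z^b \oplus K$, and then $A := \R^n \oplus \{0\} \oplus K$ is open in $H$ (since $\{0\}$ is open in the discrete $\Z^b$), so $A \iso \R^n \oplus K$ is an open subgroup of $L$ with $K$ compact.

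The heart of the existence argument is to split off $\R^n$ as a topological direct summand of $L$. Let $\pi : A \to \R^n$ denote the continuous projection with kernel $K$. Because $\R^n$ is a divisible abelian group, $\pi$ extends to an abstract homomorphism $\tilde\pi : L \to \R^n$. Openness of $A$ in $L$ forces this extension to be continuous at $0$: for any neighborhood $W$ of $0$ in $\R^n$, the set $\pi^{-1}(W)$ is open in $A$ and hence in $L$, and it is contained in $\tilde\pi^{-1}(W)$. Translation invariance then gives continuity everywhere. Since $\tilde\pi$ restricts to the identity on the $\R^n$-summand of $A$, we obtain a topological direct sum $L = L_1 \oplus \R^n$ with $L_1 := \ker\tilde\pi$. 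Moreover $L_1 \cap A = \ker\pi = K$, which is compact and, since $A$ is open in $L$, open in $L_1$.

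For uniqueness, I would first observe that any LCA group with a compact-open subgroup admits no nontrivial closed vector subgroup: an embedded $\R^k$ would meet the compact-open subgroup in an open, compact subgroup of $\R^k$, forcing $k = 0$ by connectedness of $\R^k$. Consequently, in any decomposition of the stated form, every closed vector subgroup of $L$ projects trivially to $L_1$ and hence lies inside the $\R^n$ summand, which is therefore the unique maximal closed vector subgroup of $L$. This pins down $n$ intrinsically, and $L_1 \iso L/\R^n$ is then determined up to isomorphism.

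The subtlest step is the splitting in the second paragraph---the passage from abstract divisibility of $\R^n$, which supplies only a set-theoretic extension $\tilde\pi$, to a genuine topological splitting of the short exact sequence. This relies essentially on $A$ being \emph{open} in $L$; were $A$ only closed, there would be no reason for the divisibility extension to be continuous.
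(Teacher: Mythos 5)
The paper does not actually prove this statement---it is quoted from Hewitt--Ross~(24.30) and Armacost--Armacost---so your proposal has to be judged on its own terms. The existence half is correct and is essentially the standard argument: pass to the open, compactly generated subgroup generated by a compact neighborhood of $0$, apply the structure theorem to obtain an open subgroup $A \iso \R^n \oplus K$ with $K$ compact, and use divisibility (injectivity) of $\R^n$ together with openness of $A$ to extend the projection $A \to \R^n$ to a continuous retraction $\tilde\pi\colon L \to \R^n$, which splits $L$ topologically as $\ker\tilde\pi \oplus \R^n$ with $\ker\tilde\pi \cap A = K$ compact and open in $\ker\tilde\pi$. Your emphasis on openness of $A$ as the source of continuity of $\tilde\pi$ is exactly right.

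The uniqueness half, however, has a genuine gap. The key claim---that in any decomposition $L = L_1 \oplus \R^n$ every closed vector subgroup of $L$ projects trivially to $L_1$ and hence lies inside the $\R^n$ summand---is false. Take $L = \T \oplus \R$ (so $L_1 = \T$, $n=1$) and let $V = \{(t+\Z,\,t) : t \in \R\}$ be the graph of the quotient map $\R \to \T$. Then $V$ is closed, the second coordinate projection restricts to a topological isomorphism $V \iso \R$, and $L = (\T \oplus 0) \oplus V$, yet $V$ projects \emph{onto} $\T$. Your first observation (a group with a compact-open subgroup has no nontrivial closed vector subgroup) is correct, but it does not apply to the projection of $V$ to $L_1$, which is merely a continuous homomorphic image of $\R^k$ and need be neither closed nor a vector group. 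Consequently the $\R^n$ summand is not a unique subgroup, and the final step ``$L_1 \iso L/\R^n$ is determined'' does not follow. One can still pin down $n$ intrinsically by a different argument: for any closed vector subgroup $V' \iso \R^k$ of $L$, the intersection $V' \cap L_1$ is a closed subgroup of a vector group admitting a compact open subgroup, hence discrete and free, and injectivity of the induced map into $\R^n$ forces $k \le n$; thus $n$ is the maximal dimension of a closed vector subgroup. But the assertion that $L_1$ is determined up to isomorphism, i.e.\ that $L/V \iso L/V'$ for two different vector complements, requires a separate argument and is precisely the content of the cited Armacost--Armacost paper; it cannot be waved through as you have done.
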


The main result of this paper is a structural decomposition of the automorphism group of an LCA group, using the decomposition in Theorem \ref{canondecomp}:

\begin{mainthm}\label{AutLdecomp}
	Let $L$ be an LCA group with $L = L_1\oplus \R^n$, where $L_1$ contains a compact-open subgroup. Then, as topological groups,
	\[
		\Aut(L) \iso \begin{pmatrix}
					\Aut(L_1) & \Hom(\R^n,L_1)\\
					\Hom(L_1,\R^n) & \GL_n(\R)
				  \end{pmatrix},
	\]
	where the latter is equipped with the product topology.
\end{mainthm}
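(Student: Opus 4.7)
The plan is to construct the matrix representation of automorphisms explicitly and then establish that it is a topological group isomorphism. Write $\iota_1, \iota_2$ for the inclusions of $L_1, \R^n$ into $L$ and $\pi_1, \pi_2$ for the corresponding projections; for $\phi \in \Aut(L)$ set $\phi_{ij} := \pi_i \circ \phi \circ \iota_j$, so that $\phi(\ell,x) = (\phi_{11}\ell + \phi_{12}x, \phi_{21}\ell + \phi_{22}x)$. Define $\Phi(\phi) := (\phi_{ij})$. A direct block expansion of $\phi \circ \phi'$ shows that $\Phi$ respects the natural matrix multiplication (composition in each entry, Hom-group addition where needed) on the target, and injectivity is immediate.

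The key algebraic input is a rigidity lemma between the summands: because $L_1$ contains a compact-open subgroup $K$, its identity component $L_1^0 \subset K$ is compact, whereas $\R^n$ contains no nontrivial compact subgroup. Consequently, every continuous $f \in \Hom(\R^n, L_1)$ has image in $L_1^0$ by connectedness, every continuous $g \in \Hom(L_1, \R^n)$ annihilates each compact subgroup of $L_1$, and any composition $g \circ f : \R^n \to \R^n$ vanishes. Applied to the four block equations from $\phi \circ \phi^{-1} = \phi^{-1} \circ \phi = \Id$, the cross-compositions $\phi_{21}\psi_{12}$ and $\psi_{21}\phi_{12}$ (writing $\psi = \phi^{-1}$) vanish, whence the $(2,2)$ equations collapse to $\phi_{22}\psi_{22} = \psi_{22}\phi_{22} = \Id_{\R^n}$ and so $\phi_{22} \in \GL_n(\R)$. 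Combining the $(1,1)$ equations with the off-diagonal ones then yields an explicit continuous two-sided inverse for $\phi_{11}$, placing it in $\Aut(L_1)$; hence $\Phi$ maps into $M$.

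For surjectivity, given $(A_{ij}) \in M$, the map $\phi_A(\ell,x) := (A_{11}\ell + A_{12}x, A_{21}\ell + A_{22}x)$ is continuous, and bijectivity follows from the block factorization
\[
\begin{pmatrix} A_{11} & A_{12} \\ A_{21} & A_{22} \end{pmatrix}
= \begin{pmatrix} \Id & 0 \\ A_{21} A_{11}^{-1} & \Id \end{pmatrix}
  \begin{pmatrix} A_{11} & 0 \\ 0 & A_{22} \end{pmatrix}
  \begin{pmatrix} \Id & A_{11}^{-1} A_{12} \\ 0 & \Id \end{pmatrix},
\]
in which the would-be Schur complement $A_{22} - A_{21} A_{11}^{-1} A_{12}$ reduces to $A_{22}$ by the rigidity lemma, so each factor is manifestly invertible with continuous inverse.

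The step I expect to be the main obstacle is the topological part: showing that $\Phi$ is a homeomorphism when $\Aut(L)$ carries the $g$-topology and $M$ carries the product topology (each factor in its $g$- or compact-open topology as appropriate). Continuity of $\Phi$ is comparatively direct, since each coordinate map $\phi \mapsto \phi_{ij}$ is composition with the fixed continuous $\iota_j$ and $\pi_i$ and compact subsets of $L$ split with respect to the direct sum. Continuity of $\Phi^{-1}$ is more delicate: given a subbasic $g$-neighborhood $(C,U)$ of $\Id$ in $\Aut(L)$, one must exhibit a product neighborhood of the identity matrix in $M$ whose image lies inside $(C,U)$. This reduces, via the block formula for $\phi_A$ and the decomposition of $C$ into compact pieces in $L_1$ and $\R^n$, to standard continuity properties of composition and addition in the $g$- and compact-open topologies; the delicate point is accommodating simultaneous control of all four coordinate entries together with both conditions $\phi_A(C) \subset U$ and $\phi_A^{-1}(C) \subset U$ required by the $g$-topology.
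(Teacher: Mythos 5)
Your algebraic skeleton is sound and runs parallel to the paper's: your ``rigidity lemma'' is exactly the paper's condition that $\gamma\beta=\0$ for $\gamma\colon L_1\to\R^n$, $\beta\colon\R^n\to L_1$, and your block factorization (pivoting on $A_{11}$, with the Schur complement on the $(2,2)$ corner collapsing to $A_{22}$) is the mirror image of the paper's factorization, which pivots on $\delta=A_{22}$ and isolates the quasi-determinant $\det(\phi)=\alpha-\beta\delta^{-1}\gamma$ in the $(1,1)$ corner. Either pivot works, for the same reason.

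The genuine gap is in the topological half, which is where essentially all of the paper's work lies, and your one concrete claim about it is wrong. You assert that continuity of $\Phi$ is ``comparatively direct, since each coordinate map $\phi\mapsto\phi_{ij}$ is composition with the fixed continuous $\iota_j$ and $\pi_i$.'' That argument only controls the compact-open topology on the targets, and it suffices for the two Hom-entries and for the $\GL_n(\R)$ entry (where the $g$- and compact-open topologies coincide). It fails for the $(1,1)$ coordinate: the factor $\Aut(L_1)$ carries the $g$-topology, so from $\phi_\lambda\tog\phi$ you must extract $\alpha_\lambda^{-1}\toCO\alpha^{-1}$, and $\alpha^{-1}$ is \emph{not} an entry of $\phi^{-1}$. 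Indeed the $(1,1)$-entry of $\phi^{-1}$ is $(\det(\phi))^{-1}=(\alpha-\beta\delta^{-1}\gamma)^{-1}$, so $\phi_\lambda^{-1}\toCO\phi^{-1}$ hands you convergence of $(\det(\phi_\lambda))^{-1}$, not of $\alpha_\lambda^{-1}$. The missing bridge is the pair of identities
\[
\alpha^{-1}=(\det(\phi))^{-1}\bigl(\1-\beta\delta^{-1}\gamma(\det(\phi))^{-1}\bigr),
\qquad
(\det(\phi))^{-1}=\alpha^{-1}\bigl(\1+\beta\delta^{-1}\gamma\alpha^{-1}\bigr),
\]
which follow from your rigidity lemma via the observation that $\beta\delta^{-1}\gamma(\det(\phi))^{-1}$ and $\beta\delta^{-1}\gamma\alpha^{-1}$ square to zero. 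These identities, together with the explicit closed form for $\phi^{-1}$ (every entry of $\phi^{-1}$ is a composite of $(\det(\phi))^{-1}$, $\delta^{-1}$, $\beta$, $\gamma$, all of which converge by joint continuity of composition in the compact-open topology), are what make both $\Phi$ and $\Phi^{-1}$ continuous; this is precisely the content of the paper's Theorem on the equivalence of the various convergence conditions. Your proposed $\Phi^{-1}$ argument via subbasic $(C,U)$ neighborhoods would have to reconstruct this same inverse formula to say anything about the condition $\phi_A^{-1}(C)\subset U$, so ``standard continuity properties of composition and addition'' do not by themselves close the argument.
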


The algebraic portion of Theorem \ref{AutLdecomp} can be extracted and established in a more general setting.

\begin{mainthm}\label{catdecomp}
	Let $\LC$ be an additive category with kernels, and $A = B\oplus C$ an object in $\LC$ such that:
	\begin{enumerate}[label={\rm(\Roman*)}]
		\item $\delta\in\End(C)$ is an automorphism of $C$ if and only if 
the zero morphism $\0$ is a kernel of $\delta$; and
		\item For every pair of morphisms $\gamma: B \to C$ and $\beta: C \to B$, one has that $\gamma\beta=\0$.
	\end{enumerate}
Then, as groups,
	\[
		\Aut(A) \iso \begin{pmatrix}
					\Aut(B) & \LC(C,B)\\
					\LC(B,C) & \Aut(C)
				  \end{pmatrix}.
	\]
\end{mainthm}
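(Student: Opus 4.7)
The plan is to leverage the biproduct structure of $A = B \oplus C$ to identify $\End(A)$ with $2\times 2$ matrices of component morphisms, and then pin down which matrices correspond to elements of $\Aut(A)$. Since $\LC$ is additive with biproducts, $A$ comes equipped with canonical projections $p_B, p_C$ and injections $i_B, i_C$ satisfying the usual orthogonality and completeness relations. Any $f \in \End(A)$ is uniquely determined by its four components $f_{XY} = p_X f i_Y$, and composition corresponds to $2\times 2$ matrix multiplication. I define $\Phi(f) = \left(\begin{smallmatrix} f_{BB} & f_{BC} \\ f_{CB} & f_{CC} \end{smallmatrix}\right)$; injectivity and compatibility with composition follow from this biproduct calculus, so the substantive work is to verify that every element of $\Aut(A)$ has diagonal components in $\Aut(B)$ and $\Aut(C)$, and conversely that every such matrix arises from an automorphism.

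For the surjectivity direction, given $M = \left(\begin{smallmatrix} a & b \\ c & d \end{smallmatrix}\right)$ with $a \in \Aut(B)$ and $d \in \Aut(C)$, I would establish the Schur-style factorization
\[
    M = \begin{pmatrix} 1_B & 0 \\ c a^{-1} & 1_C \end{pmatrix} \begin{pmatrix} a & 0 \\ 0 & d \end{pmatrix} \begin{pmatrix} 1_B & a^{-1} b \\ 0 & 1_C \end{pmatrix},
\]
where equality with $M$ depends on condition (II) to kill the extraneous term $c a^{-1} b$ that appears in the $(2,2)$ entry of the product. The middle factor is invertible by hypothesis on $a,d$, and each shear factor admits an explicit inverse by negating its off-diagonal entry, so $M$ is a product of invertibles and hence automorphic.

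For the forward direction, let $f \in \Aut(A)$ with inverse $g$. Expanding $fg = gf = 1_A$ as four matrix identities each and applying condition (II) to the $(2,2)$ entries (each cross term is of the form $\gamma\beta\colon C \to B \to C$, hence $\0$) gives $f_{CC} g_{CC} = g_{CC} f_{CC} = 1_C$, so $f_{CC} \in \Aut(C)$; condition (I) provides a complementary viewpoint by characterizing this via the zero-kernel criterion. Combining the $(1,1)$, $(1,2)$, and $(2,1)$ equations from both products shows that the Schur complement $s := f_{BB} - f_{BC} f_{CC}^{-1} f_{CB}$ has $g_{BB}$ as a two-sided inverse, hence lies in $\Aut(B)$. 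Writing $f_{BB} = s(1_B + s^{-1} f_{BC} f_{CC}^{-1} f_{CB})$, the perturbation $s^{-1} f_{BC} f_{CC}^{-1} f_{CB}$ squares to $\0$ via condition (II) applied to the interior $f_{CB} s^{-1} f_{BC}\colon C \to B \to C$, so $1_B$ plus this perturbation is invertible (with inverse $1_B$ minus it), forcing $f_{BB} \in \Aut(B)$.

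The main obstacle is this extraction of $f_{BB} \in \Aut(B)$ from $f \in \Aut(A)$: the $(1,1)$ entry of a matrix product does \emph{not} simplify under condition (II) (unlike the $(2,2)$ entry), so $g_{BB}$ is not literally a two-sided inverse of $f_{BB}$. The Schur-complement maneuver together with the nilpotence trick from condition (II) is the key. Finally, the matrix group structure on the target is inherited from matrix multiplication; closure under products reduces to the same nilpotence argument, now showing that the $(1,1)$ entry $a_1 a_2 + b_1 c_2$ of a product lies in $\Aut(B)$.
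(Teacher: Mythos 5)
Your proof is correct, and its skeleton --- identifying $\End(A)$ with $2\times 2$ matrices via the biproduct calculus, passing through the Schur complement, and exploiting the square-zero perturbation supplied by condition (II) --- parallels the paper's route through Proposition~\ref{endmat}, Theorem~\ref{catthm}, and Remark~\ref{1plusnil}. The genuine difference lies in how the $(2,2)$ component is handled. The paper proves $\delta\in\Aut(C)$ by a kernel chase: it takes a kernel $k$ of $\delta$, produces $g=\pi_2\phi^{-1}\iota_B\colon B\to C$ satisfying $g\beta k=k$, kills $g\beta$ by condition (II) to force $k=\0$, and only then invokes condition (I) to upgrade ``zero kernel'' to ``automorphism''. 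You instead read $f_{CC}g_{CC}=g_{CC}f_{CC}=\1_C$ directly off the $(2,2)$ entries of $fg=gf=\1_A$, the cross terms $f_{CB}g_{BC}$ and $g_{CB}f_{BC}$ dying by condition (II); this hands you a two-sided inverse outright, so condition (I) --- and indeed the existence of kernels --- is never used. Likewise your converse factorization pivots on $a$ and uses only condition (II), whereas the paper's factorization~(\ref{elemmat}) pivots on $\delta$ and routes through the quasi-determinant. What your approach buys is a proof of Theorem~\ref{catdecomp} from condition (II) alone, exhibiting hypothesis (I) as redundant for the purely algebraic statement; what the paper's approach buys is the full three-way equivalence of Theorem~\ref{catthm} (automorphism $\Leftrightarrow$ quasi-determinant invertible $\Leftrightarrow$ $\alpha$ invertible) together with the explicit inverse formulas that are reused verbatim in the topological arguments of \S\ref{sect:AutLdecomp}. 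Your closing remark about closure of the target under multiplication is harmless but unnecessary: once both implications are in place, the target is exactly the image of the group $\Aut(A)$ under the ring isomorphism of Proposition~\ref{endmat}, hence automatically a group.
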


The paper is structured as follows: In \S 2, we provide topological preliminaries regarding the compact-open and $g$-topologies. \S 3 is a discussion of an abstract categoral setting wherein we prove Theorem \ref{catdecomp}. In \S 4, we present the proof of Theorem \ref{AutLdecomp}.

\section{Preliminaries}

Throughout this paper, all spaces are assumed to be Hausdorff, and in particular, all topological groups are Tychonoff \cite[1.21]{L09}. Recall that if $X$ and $Y$ are topological spaces and $\mathcal{F}$ a collection of continuous functions from $X$ to $Y$, the \emph{compact-open topology} on $\mathcal{F}$ is the topology generated by the subbasis consisting of sets of the form
\[
	[C,U] = \{f\in\mathcal{F}: f(C)\subset U\},
\]
where $C$ is a compact subset of $X$, and $U$ an open subset of $Y$ (see \cite{F45}, \cite[\S 43]{W70}). For locally compact $X$, composition of maps is continuous in $\Homeo(X)$ when endowed with the compact-open topology, a consequence of the following property:

\begin{ftheorem}[{\cite[3.4.2]{E89}}]\label{cocomp}
	If $X$, $Y$ and $Z$ are topological spaces, with $Y$ locally compact, then the composition map $C(Y,Z) \times C(X,Y) \to C(X,Z)$ is continuous with respect to the compact-open topology.
\end{ftheorem}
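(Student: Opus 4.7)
The plan is to verify continuity of the composition map at an arbitrary $(g,f) \in C(Y,Z) \times C(X,Y)$. Since the compact-open topology on $C(X,Z)$ is generated by subbasic sets of the form $[K,W]$ with $K\subset X$ compact and $W\subset Z$ open, it suffices to show that whenever $g\circ f \in [K,W]$, there is a product-open neighborhood of $(g,f)$ whose image under composition lies in $[K,W]$.

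Fix such $(g,f)$ and $[K,W]$, and set $L := f(K)$, a compact subset of $Y$ lying in the open set $g^{-1}(W)$. The crucial step, and the only point at which local compactness of $Y$ is invoked, is to sandwich $L$ between an open set and its compact closure inside $g^{-1}(W)$:
\[
	L \subset V \subset \overline{V} \subset g^{-1}(W), \qquad \overline{V} \text{ compact}.
\]
Such a $V$ exists by a standard argument in locally compact Hausdorff spaces: each point of $L$ has a relatively compact open neighborhood whose closure lies in $g^{-1}(W)$, and a finite subcover furnishes the required $V$.

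Once $V$ is in hand, I would observe that $[\overline{V}, W] \times [K, V]$ is a product-open neighborhood of $(g,f)$ in $C(Y,Z) \times C(X,Y)$, since $g(\overline{V}) \subset W$ and $f(K) = L \subset V$. Then, for any $(g', f')$ in this neighborhood, one has $f'(K) \subset V \subset \overline{V}$, and therefore $g'(f'(K)) \subset g'(\overline{V}) \subset W$, so $g' \circ f' \in [K,W]$ as required.

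The essential difficulty, and the place where local compactness of $Y$ is indispensable, is the sandwiching of $L$. The compact ``buffer'' $\overline{V}$ is what reconciles the two factors: $g$ is only controlled on specified compact subsets of $Y$, while $f$ is only controlled by its image landing in specified open subsets of $Y$, so without the sandwich one cannot combine subbasic restrictions on the two maps into control of the composition. Everything else is routine unpacking of the definitions of the compact-open and product topologies.
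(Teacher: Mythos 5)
Your proof is correct: the sandwich $L\subset V\subset\overline{V}\subset g^{-1}(W)$ with $\overline{V}$ compact (valid since the paper assumes all spaces Hausdorff) is exactly the right use of local compactness, and the neighborhood $[\overline{V},W]\times[K,V]$ does the job. The paper offers no proof of its own --- it quotes this result from Engelking \cite[3.4.2]{E89} --- and your argument is the standard one given there, so there is nothing to compare beyond noting that you have supplied the omitted details correctly.
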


However, inversion may fail to be continuous in $\Homeo(X)$ with respect to the compact-open topology \cite[p. 57-58]{B48}; this shortcoming is remedied by the $g$-topology. The two topologies coincide when $X$ is compact, discrete, or locally connected, but not in general \cite{A46}. One can characterize convergence in the $g$-topology in terms of the compact-open topology as in the following proposition.

\begin{fproposition}[{\cite[5. (ii)]{A46}}]\label{gconv}
	Let $X$ be a locally compact space. A net $(f_\lambda)$ in $\Homeo(X)$ converges to $f \in \Homeo(X)$ in the g-topology, written $f_\lambda \tog f$, if and only if $(f_\lambda)$ converges to $f$ and $(f_\lambda^{-1})$ converges to $f^{-1}$ in the compact-open topology, written $f_\lambda \toCO f$ and $f_\lambda^{-1} \toCO f^{-1}$.
\end{fproposition}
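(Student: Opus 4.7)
The plan is to interpret the $g$-topology as the initial topology on $\Homeo(X)$ induced by the two continuous maps $f \mapsto f$ and $f \mapsto f^{-1}$ into $C(X, X)$ equipped with the compact-open topology on each copy; this is the underlying characterization that makes the given subbasis $\{(C, U)\}$ work, since $(C, U) = [C, U] \cap \{g : g^{-1} \in [C, U]\}$. Under this viewpoint both directions of the proposition become tractable.

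For $(\Leftarrow)$: Assuming $f_\lambda \toCO f$ and $f_\lambda^{-1} \toCO f^{-1}$, let $(C, U)$ be an arbitrary $g$-subbasic neighborhood of $f$. Unpacking the definition, $f \in (C, U)$ means both $f \in [C, U]$ and $f^{-1} \in [C, U]$. The first compact-open convergence places $f_\lambda \in [C, U]$ eventually, and the second places $f_\lambda^{-1} \in [C, U]$ eventually, so $f_\lambda \in (C, U)$ eventually; this holds for every subbasic $g$-neighborhood of $f$, establishing $f_\lambda \tog f$.

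For $(\Rightarrow)$: First observe that each subbasic $(C, U)$ is invariant under inversion, i.e., $f \in (C, U)$ iff $f^{-1} \in (C, U)$; the map $f \mapsto f^{-1}$ is therefore $g$-continuous, and $f_\lambda \tog f$ immediately yields $f_\lambda^{-1} \tog f^{-1}$. It thus suffices to show that the $g$-topology refines the compact-open topology on $\Homeo(X)$, so that both $g$-convergences give the corresponding compact-open convergences.

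The main obstacle is this refinement claim: given $f$ with $f(C) \subset U$, exhibit a $g$-open neighborhood of $f$ contained in $[C, U] \cap \Homeo(X)$. Here I would use local compactness of $X$ to choose an open $V$ with compact closure $\bar V$ such that $f(C) \subset V \subset \bar V \subset U$; then $K := f^{-1}(\bar V)$ is compact and contains $C$. The desired neighborhood would then be built as a finite intersection of subbasics $(C_i, U_i)$ constructed from $K$ and $\bar V$ (and, if necessary, their iterated images), with the opens $U_i$ chosen via local compactness to simultaneously contain $f(C_i)$ and $f^{-1}(C_i)$ while remaining contained in $U$ where needed. Local compactness of $X$ is essential, and pinning down this construction precisely is the technical crux; once it is in hand, both directions are complete.
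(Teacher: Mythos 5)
The paper offers no proof of this proposition---it is quoted directly from Arens---so there is no internal argument to compare against; your attempt must be judged on its own. Your $(\Leftarrow)$ direction is complete and correct. The $(\Rightarrow)$ direction, however, contains a genuine gap, and it is not merely the unexecuted construction you flag as ``the technical crux.'' With the subbasis exactly as the paper states it, the claim you are trying to establish---that the $g$-topology refines the compact-open topology on $\Homeo(X)$---is false. Each set $(C,U)$ is invariant under $f\mapsto f^{-1}$ (as you yourself observe), and finite intersections and arbitrary unions of inversion-invariant sets are again inversion-invariant, so \emph{every} open set in the topology generated by the sets $(C,U)$ alone contains $f^{-1}$ whenever it contains $f$. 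But $[C,U]\cap\Homeo(X)$ need not have this property: for $X=\R$, $C=\{0\}$, $U=(-1,1)$ and $f(x)=(x+5)/10$, one has $f(0)=1/2\in U$ while $f^{-1}(0)=-5\notin U$. Hence $[C,U]$ is not open in that topology; indeed the constant net at $f^{-1}$ converges to $f$ there while failing to converge to $f$ compact-openly. No choice of the sets $C_i$, $U_i$ in your sketch can repair this.

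The resolution is that the subbasis displayed in the paper is an imprecise rendering of Arens's definition. Arens's $g$-topology is generated by the one-sided sets $\{f: f(A)\subset W\}$ with $A$ closed, $W$ open, and either $A$ compact or $X\setminus W$ compact; equivalently, it is the initial topology induced by $f\mapsto f$ and $f\mapsto f^{-1}$ into $C(X,X)$ with the compact-open topology---precisely the characterization you open with. Your identity $(C,U)=[C,U]\cap\{g:g^{-1}\in[C,U]\}$ shows only that the paper's subbasic sets are open in this initial topology, not the converse, and that unproved converse is exactly where the discrepancy hides. If you take the initial-topology description as the definition (or Arens's subbasis, from which the compact-open subbasic sets are obtained by taking $A$ compact), the entire proposition follows at once from the standard fact that a net converges in an initial topology if and only if its images under the defining maps converge; both the refinement problem and your forward direction then disappear.
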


Given an LCA group $L$, $\Aut(L)$ is a closed subgroup of $\Homeo(L)$, endowed with the $g$-topology. Theorem \ref{canondecomp} implies a decomposition of $\End(L)$, the (additive) group of topological endomorphisms of $L$, endowed with the compact-open topology, into a topological ring of $2\times 2$ matrices. In particular, every element of $\Aut(L)$ can be algebraically represented in this way, but we caution that since $\Aut(L)$ carries the $g$-topology, it is \emph{not} a subspace of $\End(L)$. We note for future reference that if $L = \R^n$, then its ring of endomorphisms and group of automorphisms are familiar objects:

\begin{frem}[{\cite[26.18 (i)]{HR}}]\label{endRn}\mbox{}
	\begin{enumerate}
		\item Taken with the compact-open topology, $\End(\R^n) = M_n(\R)$, where the latter carries its standard topology as a subspace of $\R^{n^2}$.
		\item Taken with the $g$-topology, $\Aut(R^n) = \GL_n(\R)$, where the latter carries its standard topology. In particular, the compact-open and $g$-topologies on $\Aut(\R^n)$ coincide.
	\end{enumerate}
\end{frem}

\section{A Categorical Setting}

\label{sect:category}

In this section, we prove Theorem \ref{catdecomp}. First, we recall the following terminology from category theory:

\begin{defn}[{\cite[VIII.2]{M98}}]\label{catdefs}
	Let $\LC$ be a category.
	\begin{enumerate}

\item 
An object $\0$ in $\LC$ is a \emph{zero object} if for every
object $A$ of $\LC$, there are unique morphisms $\0\to A$ and $A\to\0$.
	
\item 
If $\LC$ has a zero object and $A$ and $B$ are objects in $\LC$, then
the \emph{zero morphism} $\0:A\to B$ is the 
composite of the morphism $A\to\0$ and $\0\to B$.
	
\item 
A \emph{kernel} of a morphism $f:A\to B$ is a morphism $k:K\to A$ such 
that:

\begin{enumerate}[label={\rm(\roman*)}]

\item
$fk = \0$; and

\item
every morphism $h\colon C\to A$ such that $fh=\0$ factors uniquely through 
$k$, that is, there is a unique morphism $k^\prime\colon C \to A$ 
making the following diagram commutative:

		\[
			\xymatrix{K \ar[dr]^k \ar@/^1pc/[drrr]^\0 & & &\\
					& A \ar[rr]^f & & B\\
					C \ar@{.>}[uu]^{\exists ! k'} \ar[ur]_h \ar@/_1pc/[urrr]_\0 & & &}
		\]

\end{enumerate}
		
\item $\LC$ is  \emph{preadditive} if for every pair of objects 
$A$ and $B$ in $\LC$, the set $\LC(A,B)$ of morphisms from $A$ to $B$ is 
an abelian group, and the composition
$\circ\colon \LC(B,C)\times \LC (A,B) \to \LC(A,C)$ is bilinear for every 
$A$, $B$, and $C$.

\item 
$\LC$ is \emph{additive} if it has a zero object, and every two 
objects in $\LC$ have a biproduct.

\end{enumerate}
\end{defn}

\begin{examples}\mbox{ }\label{adcatex}

\begin{enumerate}

\item 
The category $\mathsf{Ab}$ of abelian groups and their homomorphisms is 
additive, with the zero object being the trivial group, and biproducts 
being direct sums.

\item 
The category $\mathsf{LCA}$ of locally compact abelian groups and their 
continuous homomorphisms is additive, with the zero object being the 
trivial group, and biproducts being direct products with the product 
topology.

\end{enumerate}
\end{examples}

In an additive category $\LC$, the abelian group
$\End(A):=\LC(A,A)$ is a ring with respect to composition for every 
object $A$ in $\LC$.

\begin{fproposition}[{\cite[p.~192]{M98}}]\label{endmat}
Let $\LC$ be an additive category and $A = A_1 \oplus A_2 \oplus \cdots 
\oplus A_n$ an object of $\LC$. Then,

	\[
		\End(A) \iso \begin{pmatrix}
							\End(A_1) & \LC(A_2,A_1) & \cdots & \LC(A_n,A_1)\\							 				\LC(A_1,A_2) & \End(A_2) & \cdots & \LC(A_n,A_2)\\
					 		\vdots & \vdots &  & \vdots\\
					 		\LC(A_1,A_n) & \LC(A_2,A_n) & \cdots & \End(A_n)\\
						 \end{pmatrix}
	\]
as rings, where composition is given by matrix multiplication.
\end{fproposition}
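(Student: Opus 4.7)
The plan is to use the universal/couniversal properties of the biproduct to produce the canonical injections $\iota_k \colon A_k \to A$ and projections $\pi_k \colon A \to A_k$, and then extract matrix entries via $f \mapsto (\pi_i f \iota_j)_{i,j}$. Recall that in any additive category, the biproduct data satisfies $\pi_k \iota_k = 1_{A_k}$, $\pi_j \iota_k = \0$ for $j\neq k$, and the crucial resolution of the identity $\sum_{k=1}^n \iota_k \pi_k = 1_A$.

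With these in hand, I would define a map $\Phi \colon \End(A) \to M$ (where $M$ denotes the matrix ring on the right-hand side) by $\Phi(f) = (\pi_i f \iota_j)_{i,j}$, and its candidate inverse $\Psi \colon M \to \End(A)$ by $\Psi\bigl((g_{ij})\bigr) = \sum_{i,j} \iota_i g_{ij} \pi_j$. The verifications then proceed in the following order: (i) both $\Phi$ and $\Psi$ are group homomorphisms, which is immediate from preadditivity and bilinearity of composition; (ii) $\Phi\Psi = \Id_M$, using $\pi_i \iota_{i'} g_{i'j'} \pi_{j'} \iota_j = \delta_{ii'}\delta_{jj'} g_{ij}$; (iii) $\Psi\Phi = \Id_{\End(A)}$, which is precisely the statement $\sum_{i,j} \iota_i \pi_i f \iota_j \pi_j = \bigl(\sum_i \iota_i \pi_i\bigr) f \bigl(\sum_j \iota_j \pi_j\bigr) = 1_A \cdot f \cdot 1_A = f$.

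The heart of the matter, and the step I would write out most carefully, is verifying that $\Phi$ is multiplicative with matrix multiplication on the target. Here one inserts the identity $1_A = \sum_k \iota_k \pi_k$ between $f$ and $g$ to obtain
\[
\pi_i (fg) \iota_j \;=\; \pi_i f \Bigl(\sum_{k} \iota_k \pi_k\Bigr) g \iota_j \;=\; \sum_{k} (\pi_i f \iota_k)(\pi_k g \iota_j),
\]
which is exactly the $(i,j)$-entry of the product matrix. Combined with $\Phi(1_A) = (\pi_i \iota_j)_{i,j}$ being the identity matrix (by the biproduct relations), this shows $\Phi$ is a unital ring isomorphism.

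The main obstacle is essentially notational bookkeeping rather than any conceptual difficulty: one must ensure the indexing convention for matrix entries versus $\LC(A_j, A_i)$ is consistent (entries of row $i$ and column $j$ are morphisms $A_j \to A_i$, matching how $\pi_i f \iota_j$ has domain $A_j$ and codomain $A_i$), and that the sums are manipulated using only bilinearity and the biproduct identities, since nothing more is available in an arbitrary additive category. No appeal to kernels or to the extra hypotheses of Theorem \ref{catdecomp} is needed here; those will enter only when one specializes to automorphisms in the subsequent proof.
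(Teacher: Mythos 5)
Your proposal is correct and is precisely the standard argument: the paper does not prove Proposition~\ref{endmat} itself but cites it from Mac~Lane, and the proof sketched there is exactly your map $f \mapsto (\pi_i f \iota_j)_{i,j}$ with inverse $(g_{ij}) \mapsto \sum_{i,j}\iota_i g_{ij}\pi_j$, using the biproduct identities $\pi_i\iota_j = \delta_{ij}\1_{A_i}$ and $\sum_k \iota_k\pi_k = \1_A$. Your insertion of the resolution of the identity to verify multiplicativity is the right key step, and the same construction reappears verbatim in the paper's proof of the topological enrichment (Proposition~\ref{endmattop}).
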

	
For an object $A$ in $\LC$, we denote the set of all automorphisms 
(self-isomorphisms) of $A$ by $\Aut(A)$; it is a group under composition. 

\begin{rem}\label{easy}
Let $\LC$ be an additive category, and suppose that  $A = B\oplus C$ 
is an object of  $\LC$ such that $\LC(B,C) = \{\0\}$. Then,
	\[
		\Aut(A) \iso \begin{pmatrix}
					\Aut(B) & \LC(C,B)\\
					\0 & \Aut(C)
			            \end{pmatrix}
	\]
	as groups.
\end{rem}

Theorem \ref{catdecomp} is an analogue of the aforementioned decomposition of 
$\Aut(A)$ when $\LC(B,C)$ is not necessarily trivial. To this end, for the 
remainder of this section, we fix an additive category $\LC$ such that 
every morphism has a kernel, and an object $A$ in $\LC$ such that $A = 
B\oplus C$, with $B$ and $C$ objects of $\LC$ satisfying the following 
conditions:

\begin{enumerate}[label=(\Roman{*})]

\item\label{I} $\delta\in\End(C)$ is an automorphism of $C$ if and only if 
the zero morphism $\0$ is a kernel of $\delta$.
	
\item\label{II} 
For every pair of morphisms $\gamma: B \to C$ and $\beta: C \to B$, one has
that $\gamma\beta=\0$.
\end{enumerate}

\medskip

Theorem~\ref{catdecomp} is a consequence of Proposition~\ref{endmat}, and the equivalence of (i) and (iii) in Theorem~\ref{catthm} below.

\begin{thm}\label{catthm}
	Let $\phi \in \End(A)$, where
	\[
		\phi = \begin{pmatrix}
			\alpha & \beta\\
			\gamma & \delta
		\end{pmatrix} \in
		\begin{pmatrix}
			\End(B) & \LC(C,B)\\
			\LC(B,C) & \End(C)
		\end{pmatrix}.
	\]
	Then, the following statements are equivalent:
	\begin{enumerate}[label=\rm({\roman*})]
		\item $\phi$ is an automorphism of $A$;
		
\item $\delta$ is an automorphism of $C$, and the \emph{quasi-determinant} 
$\det(\phi) := \alpha-\beta\delta^{-1}\gamma$  is an automorphism 
of $B$;
		
		\item $\delta$ is an automorphism of $C$, and $\alpha$ is an automorphism of $B$.
	\end{enumerate}
\end{thm}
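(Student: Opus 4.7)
The plan is to establish (i)~$\Leftrightarrow$~(ii) via a block LDU factorization of $\phi$, and then to derive (ii)~$\Leftrightarrow$~(iii) from the observation that $\beta\delta^{-1}\gamma$ is a square-zero element of the ring $\End(B)$. Proposition~\ref{endmat} identifies $\End(A)$ with a ring of $2\times 2$ matrices under ordinary matrix multiplication, so all subsequent computations reduce to matrix arithmetic. The key algebraic consequence of condition~(II) is that any composite of a morphism in $\LC(B,C)$ with a morphism in $\LC(C,B)$ is the zero endomorphism of $C$. In particular,
\[
(\beta\delta^{-1}\gamma)^{2} \;=\; \beta\delta^{-1}(\gamma\beta)\delta^{-1}\gamma \;=\; \0
\]
in $\End(B)$ as soon as $\delta$ is invertible, and the same ``sandwich'' observation will eliminate every cross-term that appears below.

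Once $\delta$ is invertible, a direct matrix computation gives the LDU factorization
\[
\phi \;=\; \begin{pmatrix}\Id_{B} & \beta\delta^{-1}\\ \0 & \Id_{C}\end{pmatrix}\!\begin{pmatrix}\det(\phi) & \0\\ \0 & \delta\end{pmatrix}\!\begin{pmatrix}\Id_{B} & \0\\ \delta^{-1}\gamma & \Id_{C}\end{pmatrix},
\]
whose outer factors are always automorphisms of $A$, with inverses obtained by negating the off-diagonal entry. This makes (ii)~$\Rightarrow$~(i) immediate, since the middle factor is block diagonal with both diagonal blocks automorphisms. For (i)~$\Rightarrow$~(ii), I first produce $\delta^{-1}$ using condition~(I). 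Given a kernel $k\colon K \to C$ of $\delta$ and writing $\phi^{-1}$ in block form with entries $\alpha',\beta',\gamma',\delta'$, a short calculation with the biproduct injections and projections $i_{B},i_{C},p_{B},p_{C}$ yields $\phi(i_{C}k) = i_{B}\beta k$ (because $\delta k = \0$); applying $\phi^{-1}$ and using (II) to kill $\gamma'\beta$ rearranges this to $i_{C}k = i_{B}\alpha'\beta k$, and postcomposition with $p_{C}$ forces $k = \0$. Condition~(I) then delivers $\delta \in \Aut(C)$, after which the LDU factorization exhibits its middle factor as $L^{-1}\phi U^{-1}$, an automorphism, forcing both $\det(\phi) \in \Aut(B)$ and $\delta \in \Aut(C)$.

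Finally, (ii)~$\Leftrightarrow$~(iii) is a statement about units in $\End(B)$: since $\beta\delta^{-1}\gamma$ is square-zero, $\alpha$ and $\det(\phi) = \alpha - \beta\delta^{-1}\gamma$ are simultaneously invertible, with $\alpha^{-1} + \alpha^{-1}\beta\delta^{-1}\gamma\alpha^{-1}$ serving as a two-sided inverse of $\det(\phi)$ when $\alpha$ is invertible (the cross-term $\gamma\alpha^{-1}\beta$ vanishing by (II)), and symmetrically in the converse direction. The main obstacle is the first step: spotting that although condition~(II) is stated one-sidedly, it already makes $\beta\delta^{-1}\gamma$ square-zero and collapses every cross-term in the inverse verifications --- this is exactly what allows the quasi-determinant to behave like a genuine determinant in this abstract setting and is what carries the LDU argument through.
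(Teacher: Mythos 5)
Your proof is correct and follows essentially the same route as the paper's: the same kernel computation with the biproduct injections and projections (your $\gamma'=p_C\phi^{-1}i_B$ is the paper's $g=\pi_2\phi^{-1}\iota_B$) combined with conditions (I) and (II) to force $k=\0$ and hence $\delta\in\Aut(C)$, the same block-triangular factorization of $\phi$ for (i)$\Leftrightarrow$(ii), and the same use of condition (II) to kill the cross-term $\gamma(\cdot)\beta$ for (ii)$\Leftrightarrow$(iii). The only cosmetic differences are that the paper writes the factorization as four factors with the $\delta$-block pulled out separately (yours is the equivalent three-factor LDU form), and that it routes (ii)$\Leftrightarrow$(iii) through the ``$\1+{}$nilpotent'' lemma applied to $\alpha\det(\phi)^{-1}$ and $\det(\phi)\alpha^{-1}$ rather than exhibiting the two-sided inverse of $\det(\phi)$ directly as you do.
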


We rely on the following elementary fact from ring theory in the proof of 
Theorem~\ref{catthm}.

\begin{frem}\label{1plusnil}
	Let $R$ be a (unital) ring, and $n\in R$ a nilpotent element such that $n^2 = 0$. Then, $(\1+n)^{-1} = (\1-n)$, and in particular, $(\1+n)$ is invertible.
\end{frem}

\begin{proof}{}
(i)$\Longrightarrow$(ii): In order to show that $\delta$ is automorphism, 
let $k: K\to C$ be a kernel of $\delta$.  Denote the canonical projections $\pi_1:B\oplus C\to B$ and 
$\pi_2:B\oplus C\to C$, $\iota_K\colon K\to 
B\oplus K$ and $\iota_B\colon B\to B\oplus C$ canonical injections, and 
set $\psi = \0\oplus k : B\oplus K\to B\oplus C$. Then, one has $\psi\iota_K = (0,k)$ written componentwise as a morphism into $B\oplus C$, and so $\phi\psi\iota_K = (\beta k, 0)$. Thus,
\begin{align*}
		\pi_1\phi\psi\iota_K &= \beta k.\\
\intertext{Put $g:=\pi_2\phi^{-1}\iota_B:B\to C$. Since}
\iota_B\beta k &= (\beta k, 0) = \phi\psi\iota_K, 
\intertext{one obtains that}
g\beta k = \pi_2\phi^{-1}\iota_B\beta k & = 
\pi_2\phi^{-1}\phi\psi\iota_K = \pi_2\psi\iota_K = k.
\end{align*}
However, $g\colon B\to C$ and $\beta\colon C \to B$, and so by condition 
\ref{II}, $g\beta = \0$. Therefore, $k = \0$, and it follows from 
condition \ref{I} that $\delta$ is an automorphism.

To establish the second condition, observe that $\phi$ can be expressed as 
follows:
	\begin{align}\label{elemmat}
		\phi = \begin{pmatrix}
					\1_B & \0\\
					\0 & \delta
				\end{pmatrix}
				\begin{pmatrix}
					\1_B & \beta\\
					\0 & \1_C
				\end{pmatrix}
				\begin{pmatrix}
					\det(\phi) & \0\\
					\0 & \1_C
				\end{pmatrix}
				\begin{pmatrix}
					\1_B & \0\\
					\delta^{-1}\gamma & \1_C
				\end{pmatrix}.
	\end{align}
	Since the matrices $\begin{pmatrix}
					\1_B & \0\\
					\0 & \delta
				\end{pmatrix}$, $\begin{pmatrix}
					\1_B & \beta\\
					\0 & \1_C
				\end{pmatrix}$ and $\begin{pmatrix}
					\1_B & \0\\
					\delta^{-1}\gamma & \1_C
				\end{pmatrix}$ are invertible, the remaining matrix is also invertible. The latter occurs if and only if $\det(\phi)\in\Aut(B)$.
	
	(ii)$\Longrightarrow$(i) is an immediate consequence of (\ref{elemmat}).
	
(ii)$\Longrightarrow$(iii): It is given that $\delta$ is an automorphism 
of $C$. It follows from the definition of $\det(\phi)$ that 
$\alpha = \det(\phi) + \beta\delta^{-1}\gamma$. By multiplying both sides 
by $\det(\phi)^{-1}$, one obtains
	\[
		\alpha\det(\phi)^{-1} = \1_{B} + \beta\delta^{-1}\gamma\det(\phi)^{-1}.
	\]
By condition \ref{II}, $\gamma\det(\phi)^{-1}\beta = \0$, because 
$\det(\phi)^{-1}\beta \in \LC(C,B)$, and thus
$(\beta\delta^{-1}\gamma\det(\phi)^{-1})^2 = \0$.
Therefore, by Remark~\ref{1plusnil}, $\alpha\det(\phi)^{-1}$ is 
invertible,
and its inverse is $\1_{B} - \beta\delta^{-1}\gamma\det(\phi)^{-1}$.
Hence, $\alpha$ is invertible, and 
\begin{align}\label{alphainv}
\alpha^{-1} = 
\det(\phi)^{-1}(\1_{B}-\beta\delta^{-1}\gamma\det(\phi)^{-1}).
\end{align}

(iii)$\Longrightarrow$(ii): It is given that $\delta$ is an automorphism 
of $C$. One can express 
$\det(\phi)\alpha^{-1}$ as
	\[
		\det(\phi)\alpha^{-1} = \1_B-\beta\delta^{-1}\gamma\alpha^{-1}.
	\]
	By condition \ref{II}, $\gamma\alpha^{-1}\beta = \0$, because $\alpha^{-1}\beta\in\LC(C,B)$, and thus $(-\beta\delta^{-1}\gamma\alpha^{-1})^2 = 0$. Therefore, by Remark~\ref{1plusnil}, $\det(\phi)\alpha^{-1}$ is invertible, and its inverse is $\1_B+\beta\delta^{-1}\gamma\alpha^{-1}$. Hence, $\det(\phi)$ is invertible, and
	\begin{align}\label{detinv}
		\det(\phi)^{-1} = \alpha^{-1}(\1_{B} + \beta\delta^{-1}\gamma\alpha^{-1}).
	\end{align}
This completes the proof. 
\end{proof}

The proof of Theorem \ref{catthm} also enables us to provide an explicit formula for the inverse of an element in $\Aut(A)$.

\begin{cor}\label{autinverse}
	If $\phi\in\Aut(A)$ with $\phi = \begin{pmatrix}\alpha&\beta\\\delta&\gamma\end{pmatrix}$, then
		\[
			\phi^{-1} = \begin{pmatrix}
						(\det(\phi))^{-1} &  -(\det(\phi))^{-1}(\beta\delta^{-1})\\
						-\delta^{-1}\gamma(\det(\phi))^{-1} & \delta^{-1}
					\end{pmatrix}.
		\]
\end{cor}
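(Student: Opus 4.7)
The natural approach is to reuse the factorization~(\ref{elemmat}) that already appears in the proof of Theorem~\ref{catthm}, namely
\[
\phi = \begin{pmatrix}\1_B & \0\\\0 & \delta\end{pmatrix}
       \begin{pmatrix}\1_B & \beta\\\0 & \1_C\end{pmatrix}
       \begin{pmatrix}\det(\phi) & \0\\\0 & \1_C\end{pmatrix}
       \begin{pmatrix}\1_B & \0\\\delta^{-1}\gamma & \1_C\end{pmatrix}.
\]
Since $\phi\in\Aut(A)$, Theorem~\ref{catthm} guarantees that both $\delta$ and $\det(\phi)$ are invertible, so each of the four factors is invertible. The two block-diagonal factors invert by inverting their diagonal entries, and each of the two ``elementary'' factors has the form $\1_A+N$ with $N^2=\0$ (the products of off-diagonal entries with themselves are automatically zero in the $2\times 2$ matrix sense), so by Remark~\ref{1plusnil} their inverses are obtained by negating the off-diagonal entry.

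The plan is then to write $\phi^{-1}$ as the product of the four inverses in reverse order, that is,
\[
\phi^{-1} = \begin{pmatrix}\1_B & \0\\-\delta^{-1}\gamma & \1_C\end{pmatrix}
            \begin{pmatrix}\det(\phi)^{-1} & \0\\\0 & \1_C\end{pmatrix}
            \begin{pmatrix}\1_B & -\beta\\\0 & \1_C\end{pmatrix}
            \begin{pmatrix}\1_B & \0\\\0 & \delta^{-1}\end{pmatrix},
\]
and then carry out the matrix multiplication from right to left. The first three of these products are straightforward book-keeping and yield
\[
\begin{pmatrix}\det(\phi)^{-1} & -\det(\phi)^{-1}\beta\delta^{-1}\\\0 & \delta^{-1}\end{pmatrix}
\]
after multiplying the right three factors.

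Left-multiplying by the remaining lower-triangular factor produces three of the four desired entries immediately, and in the bottom-right slot it produces $\delta^{-1} + \delta^{-1}\gamma\det(\phi)^{-1}\beta\delta^{-1}$. The single substantive step is to observe that condition~\ref{II} kills the second summand: since $\det(\phi)^{-1}\beta\in\LC(C,B)$ and $\gamma\in\LC(B,C)$, one has $\gamma\det(\phi)^{-1}\beta=\0$, and the bottom-right entry collapses to $\delta^{-1}$. This is the only place where the hypotheses on $\LC$ are used, and it is the step I would expect to be the main (indeed, only) non-bookkeeping point. Assembling the four entries gives the displayed formula.
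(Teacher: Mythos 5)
Your proposal is correct and follows essentially the same route as the paper: invert the factorization~(\ref{elemmat}) factor by factor, multiply out, and use condition~\ref{II} to annihilate the term $\delta^{-1}\gamma(\det(\phi))^{-1}\beta\delta^{-1}$ in the bottom-right entry. The only cosmetic difference is that you invoke Remark~\ref{1plusnil} to invert the elementary factors, which the paper leaves implicit.
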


\begin{proof}
	The inverse $\varphi^{-1}$  can be obtained by 
expressing $\phi$ in the form provided in (\ref{elemmat}), and calculating 
the inverse of each of the factors as follows:
		\[
			\phi^{-1} =	 \begin{pmatrix}
						\1 & \0\\
						-\delta^{-1}\gamma&\1_C
				 	\end{pmatrix}
				 	\begin{pmatrix}
				 		\det(\phi)^{-1}&\0\\
						\0&\1_C
					 \end{pmatrix}
					 \begin{pmatrix}
					 	\1_B & -\beta\\
						\0 & \1_C
					 \end{pmatrix}
				 	\begin{pmatrix}
					 	\1_B & \0\\
						\0 & \delta^{-1}
					 \end{pmatrix}.
		\]
Therefore,
		\[
			\phi^{-1} = \begin{pmatrix}
						\det(\phi)^{-1} & -(\det(\phi))^{-1}(\beta\delta^{-1})\\
						-\delta^{-1}\gamma(\det(\phi))^{-1} & \delta^{-1}\gamma(\det(\phi))^{-1}\beta\delta^{-1}+\delta^{-1}
					 \end{pmatrix}.
		\]
However, $(\det(\phi))^{-1}\beta\in\LC(C,B)$, so $\gamma(\det(\phi))^{-1}\beta = \0$ by condition \ref{II}, and $\delta^{-1}\gamma(\det(\phi))^{-1}\beta\delta^{-1} = \0$.
\end{proof}

We now apply these general results to the category $\mathsf{LCA}$.
	
\begin{prop}
	$\mathsf{LCA}$ is an additive category with kernels, and the decomposition of an LCA group given in Theorem \ref{canondecomp} satisfies conditions \ref{I} and \ref{II}. That is, given $L = L_1\oplus\R^n$, where $L_1$ contains a compact-open subgroup, then:
	\begin{enumerate}
		\item $\delta\in\End(\R^n)$ is an automorphism if and only if it has trivial kernel;
		
		\item for every pair of continuous homomorphisms $\gamma:L_1\to \R^n$ and $\beta:\R^n\to L_1$, one has $\gamma\beta = \0$.
	\end{enumerate}
\end{prop}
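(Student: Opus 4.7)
The plan is to verify the three claims in turn, with the first two being routine and the third being the substantive one. For additivity of $\mathsf{LCA}$, Examples \ref{adcatex}(ii) already asserts this, so it suffices to mention that the trivial group is the zero object and that finite direct products equipped with the product topology serve as biproducts. For kernels, I will observe that if $f: L\to M$ is a continuous homomorphism of LCA groups, then the algebraic kernel $\ker(f) = f^{-1}(0)$ is a closed subgroup of $L$, hence itself an LCA group, and the inclusion $\ker(f) \longhookrightarrow L$ has the required universal property inherited from the analogous statement for abelian groups (every continuous homomorphism into $L$ which is annihilated by $f$ lands in $\ker(f)$ as a set, and the induced factorization is automatically continuous by the subspace topology).

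For condition \ref{I}, I would simply invoke Remark~\ref{endRn}(i), which identifies $\End(\R^n)$ with $M_n(\R)$, and then appeal to the classical fact from linear algebra that a square matrix over $\R$ is invertible if and only if it is injective. One should note that an invertible linear map on $\R^n$ is automatically a homeomorphism, so an algebraic automorphism is a categorical automorphism in $\mathsf{LCA}$.

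The main content is condition \ref{II}, and this is where I would spend the most care. Let $\beta:\R^n\to L_1$ and $\gamma:L_1\to\R^n$ be continuous homomorphisms, and fix a compact-open subgroup $K \subset L_1$. The first step is to show that $\beta(\R^n)\subseteq K$: since $K$ is open, the quotient $L_1/K$ is discrete, so the composition $\R^n \to L_1 \to L_1/K$ is a continuous homomorphism from the connected group $\R^n$ into a discrete group, and therefore constantly zero. The second step is to observe that $\gamma(K)$ is a compact subgroup of $\R^n$; but the only compact subgroup of $\R^n$ is the trivial one, since any nonzero element $x\in\R^n$ generates an unbounded sequence $\{nx : n\in\Z\}$. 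Combining these two steps, $\gamma\beta(\R^n) \subseteq \gamma(K) = \{\0\}$, which is exactly condition \ref{II}.

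The only point requiring a touch of attention is the compactness-of-subgroups-of-$\R^n$ fact, which I would justify briefly (say, by noting that a compact subgroup of $\R^n$ is bounded, and any nontrivial subgroup of $\R^n$ contains arbitrarily large elements). Everything else is either cited from an earlier result in the excerpt or is immediate from the definitions.
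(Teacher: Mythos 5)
Your proposal is correct and follows essentially the same route as the paper: the key step in both is that $\beta(\R^n)$ lands in a compact subgroup of $L_1$, whose image under $\gamma$ must then be a compact, hence trivial, subgroup of $\R^n$. The only cosmetic difference is that you push $\beta(\R^n)$ into the compact-open subgroup $K$ via discreteness of $L_1/K$, whereas the paper pushes it into the connected component $c(L_1)$ (which is compact precisely because it sits inside $K$); the remaining points about additivity, kernels, and condition (I) match the paper's treatment.
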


\begin{proof}
By Example~\ref{adcatex}(b), $\mathsf{LCA}$ is an additive category. 
If $f\colon L\to H$ is a continuous homomorphism of LCA  groups, then 
the inclusion map $k\colon\ker f\to L$ is a kernel of $f$ in the sense of Definition \ref{catdefs} (c).

(a) follows from Proposition \ref{endRn}.
		
(b) Since $\R^n$ is connected, $\beta(\R^n)$ is contained in the connected 
component $c(L_1)$ of $L_1$, which is compact. One has
$\gamma(c(L_1)) = \{0\}$, because the only compact subgroup 
of $\R^n$  is the trivial one. Hence, $\gamma\beta = \0$.
\end{proof}

\begin{cor}\label{lcadet}
	Let $L = L_1\oplus\R^n$ be an LCA group, where $L_1$ contains a compact-open subgroup, and $\phi \in \End(L)$, with
	\[
		\phi = \begin{pmatrix}
			\alpha & \beta\\
			\gamma & \delta
		\end{pmatrix} \in
		\begin{pmatrix}
			\End(L_1) & \Hom(\R^n,L_1)\\
			\Hom(L_1,\R^n) & M_n(\R)
		\end{pmatrix}.
	\]
	Then, the following statements are equivalent:
	\begin{enumerate}[label=\rm({\roman*})]
		\item $\phi$ is an automorphism of $L$;
		
\item $\delta$ is an automorphism of $\R^n$ (i.e., in $\GL_n(\R)$), and the 
quasi-determinant of $\phi$ is an automorphism of $L_1$;
		
\item $\delta$ is an automorphism of $\R^n$, and $\alpha$ is an 
automorphism of $L_1$.
\hfill\qed
\end{enumerate}
\end{cor}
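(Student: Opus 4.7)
The plan is to obtain Corollary~\ref{lcadet} as a direct specialization of Theorem~\ref{catthm} to the category $\mathsf{LCA}$ with $B = L_1$ and $C = \R^n$; the work of verifying the categorical hypotheses has already been done in the immediately preceding proposition, and the identification of the endomorphism ring of $\R^n$ with $M_n(\R)$ has been recorded in Remark~\ref{endRn}(a).

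First I would point out that Proposition~\ref{endmat} gives the matrix decomposition
\[
\End(L) \iso \begin{pmatrix}\End(L_1) & \LC(\R^n,L_1)\\ \LC(L_1,\R^n) & \End(\R^n)\end{pmatrix},
\]
and then rewrite the entries in the $\mathsf{LCA}$ notation as $\Hom(\R^n,L_1)$, $\Hom(L_1,\R^n)$, and (invoking Remark~\ref{endRn}(a)) $M_n(\R)$. This gives the ambient matrix ring in which the statement of the corollary lives and makes it clear that every element of $\End(L)$ can be written in the form $\phi = \begin{pmatrix}\alpha&\beta\\\gamma&\delta\end{pmatrix}$ displayed in the statement.

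Next I would invoke the preceding proposition, which establishes that $\mathsf{LCA}$ is an additive category with kernels and that the canonical decomposition $L = L_1 \oplus \R^n$ satisfies conditions \ref{I} and \ref{II}. With these hypotheses in hand, Theorem~\ref{catthm} applies verbatim with $B = L_1$ and $C = \R^n$, and its three equivalent conditions (i)--(iii) translate directly into the three conditions (i)--(iii) of the corollary, since an automorphism of $\R^n$ in $\mathsf{LCA}$ is precisely an element of $\GL_n(\R)$ by Remark~\ref{endRn}(b).

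There is essentially no obstacle here: the corollary is purely a matter of transcribing the abstract equivalences of Theorem~\ref{catthm} into the concrete $\mathsf{LCA}$ setting, once the preceding proposition has certified that the required categorical hypotheses are in force. The only thing to be careful about is to note explicitly that ``kernel in $\mathsf{LCA}$ is trivial'' corresponds to ``the zero morphism is a kernel'' in the sense of Definition~\ref{catdefs}(c), which is immediate from the description of kernels in $\mathsf{LCA}$ given in the preceding proof.
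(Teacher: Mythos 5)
Your proposal is correct and matches the paper's intent exactly: the corollary carries a \qed in its statement precisely because it is the immediate specialization of Theorem~\ref{catthm} (with $B = L_1$, $C = \R^n$) once the preceding proposition has verified that $\mathsf{LCA}$ is an additive category with kernels satisfying conditions \ref{I} and \ref{II}, together with the identifications from Remark~\ref{endRn}. Nothing further is needed.
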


\section{$\Aut(L)$ and Decompositions of $L$}\label{sect:AutLdecomp}

	In this section, whenever $L$ and $H$ are LCA groups, the group $\Hom(L,H)$ of continuous homomorphisms from $L$ to $H$, and the ring $\End(L)$ of continuous endomorphisms of $L$, are endowed with the compact-open topology, while the group $\Aut(L)$ of topological automorphisms, will have the $g$-topology. We show that the results of \S\ref{sect:category} remain true for LCA groups with a topological enrichment in the sense that the algebraic isomorphisms from \S\ref{sect:category} become 
topological isomorphisms in the presence of the aforementioned topologies. The culmination of this work is Theorem \ref{AutLdecomp}, a topological enrichment of Theorem \ref{catdecomp}. We begin with the following enrichment of Proposition~\ref{endmat}:
	
\begin{prop}\label{endmattop} 
Let $L = L_1 \oplus L_2 \oplus\cdots \oplus L_n$, where each $L_i$ is an 
LCA group. Then,
	\[
		\End(L) \iso \begin{pmatrix}
							\End(L_1) & \Hom(L_2,L_1) & \cdots & \Hom(L_n,L_1)\\							 				\Hom(L_1,L_2) & \End(L_2) & \cdots & \Hom(L_n,L_2)\\
					 		\vdots & \vdots &  & \vdots\\
					 		\Hom(L_1,L_n) & \Hom(L_2,L_n) & \cdots & \End(L_n)\\
						 \end{pmatrix}
	\]
	as topological rings, where the latter is equipped with the product topology.
\end{prop}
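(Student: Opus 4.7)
My plan is to read off the underlying ring isomorphism from Proposition \ref{endmat} for free, and then upgrade it to a homeomorphism by checking continuity in each direction. Writing $\pi_j\colon L \to L_j$ and $\iota_i\colon L_i \to L$ for the canonical projections and injections associated to the biproduct decomposition, the forward map sends $\phi \in \End(L)$ to the matrix $(\pi_j \phi \iota_i)_{i,j}$, while its inverse sends a matrix $(\phi_{ij})$ to the finite sum $\sum_{i,j} \iota_j \phi_{ij} \pi_i \in \End(L)$. These are inverse ring homomorphisms by Proposition \ref{endmat}, so only the topological statement is at issue.

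To verify continuity of the forward map into the product topology, it suffices to show that each coordinate map $\phi \mapsto \pi_j \phi \iota_i$ is continuous from $\End(L)$ to $\Hom(L_i,L_j)$ in the compact-open topology. I would factor it as pre-composition with $\iota_i$ followed by post-composition with $\pi_j$, and invoke Theorem \ref{cocomp} twice with $L$ (locally compact as an LCA group) playing the role of the intermediate space in both applications.

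For continuity of the inverse, I would argue in two stages. First, by another application of Theorem \ref{cocomp}, each map $\phi_{ij} \mapsto \iota_j \phi_{ij} \pi_i$ from $\Hom(L_i,L_j)$ to $\End(L)$ is continuous. Second, since $L$ is a topological group, $\End(L)$ is a topological group under pointwise addition in the compact-open topology, so a finite sum of $\End(L)$-valued continuous maps is continuous; this yields continuity of $(\phi_{ij}) \mapsto \sum_{i,j} \iota_j \phi_{ij} \pi_i$.

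The only point needing a bit of care is the continuity of pointwise addition on $\End(L)$ in the compact-open topology, but this is a standard fact about function spaces valued in a topological group and requires no new input. Combined with the algebraic content of Proposition \ref{endmat}, the two continuity verifications upgrade the ring isomorphism to a topological ring isomorphism, which is exactly what is claimed.
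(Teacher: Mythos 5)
Your proof is correct and takes essentially the same route as the paper: the same coordinate maps $\phi\mapsto\pi_j\phi\iota_i$ with continuity from Theorem \ref{cocomp}, and continuity of the inverse from \ref{cocomp} together with continuity of addition in $\End(L)$. The only quibble is notational: placing $\pi_j\phi\iota_i$ in position $(i,j)$ transposes the paper's matrix and would make the map an anti-homomorphism under the usual matrix multiplication, whereas the paper's convention $[\pi_i\phi\iota_j]$ gives a genuine ring homomorphism.
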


\begin{proof}
	Let $[A_{(i,j)}]$ denote the matrix ring on the right-hand side, where $A_{(i,j)} = \End(L_i)$ if $i = j$, and $A_{(i,j)} = \Hom(L_j,L_i)$ otherwise. We define the map $F: \End(L)\to[A_{(i,j)}]$ by
	\[
		F(\phi) = [\pi_i \phi \iota_j],
	\]
	where $\pi_i$ is the canonical projection of $L$ onto $L_i$, and $\iota_j$ the inclusion of $L_j$ into $L$. By Proposition~\ref{endmat}, $F$ is a ring homomorphism from $\End(L)$ onto $[A_{(i,j)}]$. This map is continuous, since all of the spaces involved are given the compact-open topology, and the map $\phi\mapsto\pi_i\phi\iota_j$ is continuous by Proposition \ref{cocomp}. The inverse of $F$ is given by
	\[
		F^{-1}([\alpha_{i,j}]) = \sum_{(i,j)}{\iota_i \alpha_{i,j}\pi_j},
	\]
which is continuous by Proposition \ref{cocomp} and the continuity of addition in $\End(L)$.
\end{proof}
	
From now on, if $L$ is a direct sum of (finitely many) LCA groups,  we 
identify $\End(L)$ with the aforesaid matrix decomposition. 
Recall that  $\Aut(L)$ is equipped with the $g$-topology, which need not 
coincide with the topology inherited from $\End(L)$. Therefore, 
decomposition results concerning  $\End(L)$ do not automatically give rise 
to those for $\Aut(L)$. Nevertheless, in the simplest case,
a topological enrichment of Remark~\ref{easy} holds.
	
\begin{prop}\label{easytop}
	Suppose that $L = A \oplus B$, where $A$ and $B$ are LCA groups with $\Hom(A,B) = \0$. Then,
	\[
		\Aut(L) \iso \begin{pmatrix}
					\Aut(A) & \Hom(B,A)\\
					\0 & \Aut(B)
			            \end{pmatrix},
	\]
as topological groups, where the right-hand side is equipped with  the 
product topology.
\end{prop}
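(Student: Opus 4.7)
My plan is to define the natural map $F\colon \Aut(L)\to M$, where $M$ denotes the matrix group on the right, by sending $\phi$ to the matrix of composites $\pi_A\phi\iota_A$, $\pi_A\phi\iota_B$, $\pi_B\phi\iota_A$, $\pi_B\phi\iota_B$ (with $\pi$ and $\iota$ the canonical projections and inclusions associated to $L = A\oplus B$). The algebraic content is already in place: since $\Hom(A,B) = \0$, the entry $\pi_B\phi\iota_A$ is forced to be zero, and Remark~\ref{easy}, applied in $\mathsf{LCA}$, says that $F$ is a group isomorphism onto $M$. It remains to show that $F$ and $F^{-1}$ are continuous, where $\Aut(L)$ carries the $g$-topology while $M$ carries the product of the $g$-topologies on $\Aut(A)$ and $\Aut(B)$ and the compact-open topology on $\Hom(B,A)$.

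For continuity of $F$, I would use the characterization in Proposition~\ref{gconv}: it suffices to check that each coordinate projection of $F$ is continuous into the $g$-topology on its target. For the off-diagonal entry $\phi\mapsto\pi_A\phi\iota_B$, only compact-open continuity is at issue, and this follows from Theorem~\ref{cocomp} together with the fact that the $g$-topology on $\Aut(L)$ refines the compact-open topology. For a diagonal entry such as $\phi\mapsto \pi_A\phi\iota_A\in\Aut(A)$, Proposition~\ref{gconv} reduces the problem to showing that both $\pi_A\phi_\lambda\iota_A\toCO \pi_A\phi\iota_A$ and $(\pi_A\phi_\lambda\iota_A)^{-1}\toCO(\pi_A\phi\iota_A)^{-1}$ whenever $\phi_\lambda\tog\phi$. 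The first is again Theorem~\ref{cocomp}. For the second, the key observation is that because $\phi$ has the upper-triangular form, Corollary~\ref{autinverse} (or a direct $2\times 2$ calculation) gives $(\pi_A\phi\iota_A)^{-1} = \pi_A\phi^{-1}\iota_A$, and likewise for the $B$-entry; so this convergence follows from $\phi_\lambda^{-1}\toCO \phi^{-1}$, which is part of the definition of $\phi_\lambda\tog\phi$.

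For continuity of $F^{-1}$, I would again invoke Proposition~\ref{gconv}. Continuity into the compact-open topology on $\End(L)$ is exactly Proposition~\ref{endmattop}, since the inclusion $\Aut(A)\hookrightarrow\End(A)$ (and similarly for $B$) is continuous from the $g$-topology to the compact-open topology. What remains is to check that the assignment $\begin{pmatrix}\alpha&\beta\\\0&\delta\end{pmatrix}\mapsto\phi^{-1}$ is continuous into the compact-open topology. Using the explicit inverse formula from Corollary~\ref{autinverse}, which in this triangular case simplifies to
\[
\phi^{-1} \;=\; \iota_A\alpha^{-1}\pi_A \;-\; \iota_A\alpha^{-1}\beta\delta^{-1}\pi_B \;+\; \iota_B\delta^{-1}\pi_B,
\]
this reduces to checking that $\alpha\mapsto\alpha^{-1}$ on $\Aut(A)$ and $\delta\mapsto\delta^{-1}$ on $\Aut(B)$ are continuous into the compact-open topology, which is automatic because inversion is $g$-continuous by definition and the $g$-topology refines the compact-open topology. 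Theorem~\ref{cocomp} and continuity of addition in $\End(L)$ then assemble these ingredients into compact-open continuity of $F^{-1}(\cdot)^{-1}$.

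The only genuine subtlety is the mismatch between topologies on $\Aut(L)$ and $\End(L)$; the main work is the bookkeeping in the previous paragraph, where one repeatedly trades $g$-continuity for compact-open continuity via Proposition~\ref{gconv} and the fact, visible from the triangular form, that coordinate inverses of $\phi$ are literally the coordinates of $\phi^{-1}$. No new computation beyond the inverse formula of Corollary~\ref{autinverse} is required.
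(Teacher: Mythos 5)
Your proposal is correct and follows essentially the same route as the paper: the same map $F$, the reduction of $g$-convergence to compact-open convergence of nets and their inverses via Proposition~\ref{gconv}, and the observation that the triangular form makes the diagonal entries of $\phi^{-1}$ equal to the inverses of the diagonal entries of $\phi$. (Your explicit inverse $\bigl(\begin{smallmatrix}\alpha^{-1}&-\alpha^{-1}\beta\delta^{-1}\\ \0&\delta^{-1}\end{smallmatrix}\bigr)$ is in fact the correct one; the paper's displayed off-diagonal entry $-\beta$ is a typo that does not affect the argument.)
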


\begin{proof}
Define
	\[
		F\colon\Aut(L) \to \begin{pmatrix}\Aut(A) & \Hom(B,A)\\\0 & 
                \Aut(B)\end{pmatrix}, 
                \text{ by } F(\phi) = \begin{pmatrix} \alpha & \beta\\ 0 & 
                \delta\end{pmatrix}.
	\]
By Remark~\ref{easy}, $F$ is well-defined and it is a group isomomorphism, 
and in particular, $\alpha$ and $\delta$ are automorphisms of $A$ and $B$, 
respectively. Thus, it remains to be seen that $F$ is also a 
homeomorphism. Let $(\phi_\lambda)$ be a net converging (in the $g$-topology) to $\phi\in\Aut(L)$, where
	\[
		F(\phi_\lambda) = \begin{pmatrix} \alpha_\lambda & \beta_\lambda\\ 0 & \delta_\lambda\end{pmatrix}, \text{ and } F(\phi) = \begin{pmatrix} \alpha & \beta\\ 0 & \delta\end{pmatrix}.
	\]
	One may show by direct computation that
	\[
		\begin{pmatrix} \alpha_\lambda & \beta_\lambda\\ 0 & \delta_\lambda\end{pmatrix}^{-1} = \begin{pmatrix} \alpha_\lambda^{-1} & -\beta_\lambda\\ 0 & \delta_\lambda^{-1}\end{pmatrix}, \text{ and } \begin{pmatrix} \alpha & \beta\\ 0 & \delta\end{pmatrix}^{-1} = \begin{pmatrix} \alpha^{-1} & -\beta\\ 0 & \delta^{-1}\end{pmatrix}.
	\]
	Since $\phi_\lambda\tog\phi$, by Proposition \ref{gconv},
	\[
		\begin{pmatrix} \alpha_\lambda & \beta_\lambda\\ 0 & \delta_\lambda\end{pmatrix} \toCO \begin{pmatrix} \alpha & \beta\\ 0 & \delta\end{pmatrix}, \text{ and }
		\begin{pmatrix} \alpha_\lambda^{-1} & -\beta_\lambda\\ 0 & \delta_\lambda^{-1}\end{pmatrix} \toCO \begin{pmatrix} \alpha^{-1} & -\beta\\ 0 & \delta^{-1}\end{pmatrix}.
	\]
	In particular, $\alpha_\lambda\toCO\alpha$ and $\alpha_\lambda^{-1}\toCO\alpha^{-1}$, and by Proposition~\ref{gconv} applied to $\Aut(A)$, we have that $\alpha_\lambda\tog\alpha$. Similarly, $\delta_\lambda\tog\delta$ in $\Aut(C)$. It is clear that $\beta_\lambda\to\beta$ in $\Hom(B,A)$, and thus, $F(\phi_\lambda) \to F(\phi)$. Therefore $F$ is continuous.
	
	To see that $F^{-1}$ is continuous, suppose that
	\[
		\begin{pmatrix} \alpha_\lambda & \beta_\lambda\\ 0 & \delta_\lambda\end{pmatrix} \to\begin{pmatrix} \alpha & \beta\\ 0 & \delta\end{pmatrix} \text{ in } \begin{pmatrix}\Aut(A) & \Hom(B,A)\\\0 & \Aut(B)\end{pmatrix}.
	\]
	 One has that $\beta_\lambda\to\beta$ in $\Hom(B,A)$, $\alpha_\lambda\tog\alpha$ in $\Aut(A)$, and $\delta_\lambda\tog\delta$ in $\Aut(B)$. By Proposition \ref{gconv}, $\alpha_\lambda\toCO\alpha$ and $\alpha_\lambda^{-1}\toCO\alpha$, and $\delta_\lambda\toCO\delta$ and $\delta_\lambda^{-1}\toCO\delta^{-1}$. The compact-open topology on $\End(L)$ coincides with the product topology where each of the component spaces have the compact-open topology, as given in Proposition~\ref{endmattop}. Therefore,
	\[
		\begin{pmatrix} \alpha_\lambda & \beta_\lambda\\ 0 & \delta_\lambda\end{pmatrix} \toCO \begin{pmatrix} \alpha & \beta\\ 0 & \delta\end{pmatrix}, \text{ and }
		\begin{pmatrix} \alpha_\lambda^{-1} & -\beta_\lambda\\ 0 & \delta_\lambda^{-1}\end{pmatrix} \toCO \begin{pmatrix} \alpha^{-1} & -\beta\\ 0 & \delta^{-1}\end{pmatrix}.	
	\]
	Hence, $F$ is a topological isomorphism.
\end{proof}

If $L$ is a compactly generated LCA group, then $L \iso K\oplus\R^n\oplus\Z^m$, where $K$ is the maximal compact subgroup of $L$ \cite[9.8]{HR}, while if $L$ is a connected LCA group, then $L \iso K\oplus\R^n$ where $K$ is the maximal compact connected subgroup of $L$ \cite[9.14]{HR}. Combining these facts with Proposition~\ref{easytop}, we have the following:

\begin{cor}\mbox{}
	\begin{enumerate}
	\item	Let $L \iso K\oplus\R^n\oplus\Z^m$ be a compactly generated LCA group, where $K$ is the maximal compact subgroup of $L$. Then,
	\[
		\End(L) \iso \begin{pmatrix}
										\End(K) & \Hom(\R^n,K) & K^m \\
										\0 & M_n(\R) & \R^{mn} \\
										\0 & \0 & M_m(\Z)\\
								 \end{pmatrix}
		\text{ and }
		\Aut(L) \iso \begin{pmatrix}
										\Aut(K) & \Hom(\R^n,K) & K^m \\
										\0 & \GL_n(\R) & \R^{mn} \\
										\0 & \0 & \GL_m(\Z)\\
								 \end{pmatrix}.
	\]
	
	\item Let $L \iso K\oplus\R^n$ be a connected LCA group, where $K$ is the maximal compact connected subgroup of $L$. Then,
	\[
		\End(L) \iso \begin{pmatrix}\End(K) & \Hom(\R^n,K)\\\0 & M_n(\R)\end{pmatrix}, \text{ and }
		\Aut(L) \iso \begin{pmatrix}\Aut(K) & \Hom(\R^n,K)\\\0 & \GL_n(\R)\end{pmatrix}.
	\]
	\end{enumerate}
\end{cor}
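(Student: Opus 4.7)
The plan is to apply Proposition~\ref{easytop} (and Proposition~\ref{endmattop} for the endomorphism ring) iteratively, using the structural theorems for compactly generated and connected LCA groups cited immediately before the statement. For each direct-sum decomposition, I will verify that the $\Hom$ groups from ``higher'' to ``lower'' summands vanish, so that Proposition~\ref{easytop} applies and yields an upper-triangular matrix decomposition.

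For part (a), I would group $L \iso K \oplus (\R^n \oplus \Z^m)$. Since $K$ is compact and neither $\R^n$ nor $\Z^m$ has a nontrivial compact subgroup, the image of any continuous homomorphism from $K$ into $\R^n\oplus\Z^m$ is both compact and in a group without nontrivial compact subgroups, hence trivial; thus $\Hom(K,\R^n\oplus\Z^m) = \0$. A first application of Proposition~\ref{easytop} splits off $\Aut(K)$ in the upper-left. To decompose the remaining factor $\R^n \oplus \Z^m$, one notes that $\Hom(\R^n, \Z^m) = \0$ because $\R^n$ is connected while $\Z^m$ is totally disconnected, so any continuous homomorphism has connected (hence trivial) image. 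A second application of Proposition~\ref{easytop} yields the full $3\times 3$ upper-triangular structure. For part (b), a single application of Proposition~\ref{easytop} suffices, since $\Hom(K, \R^n) = \0$ for $K$ compact and $\R^n$ lacking nontrivial compact subgroups.

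It then remains to identify the off-diagonal entries. For any LCA group $H$, a continuous homomorphism from $\Z^m$ to $H$ is uniquely determined by the images of the standard generators, and any $m$-tuple in $H$ extends to such a homomorphism; under the compact-open topology this evaluation map is a homeomorphism onto $H^m$, since $\Z^m$ is discrete and finitely generated, so the compact-open topology on $\Hom(\Z^m,H)$ reduces to pointwise convergence on the generators. This identifies $\Hom(\Z^m,K) \iso K^m$, $\Hom(\Z^m,\R^n) \iso \R^{mn}$, $\End(\Z^m) \iso M_m(\Z)$, and $\Aut(\Z^m) \iso \GL_m(\Z)$. The remaining entries $\End(\R^n) = M_n(\R)$ and $\Aut(\R^n) = \GL_n(\R)$ are supplied by Remark~\ref{endRn}. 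The $\End(L)$ decompositions follow from Proposition~\ref{endmattop} combined with the same vanishing-$\Hom$ observations.

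I expect the main subtlety to be a bookkeeping one: iterating Proposition~\ref{easytop} in the three-summand case without losing track of which $\Hom$-vanishing is being used at each step, and confirming that the topological identifications of $\Hom(\Z^m,-)$ with products go through in the compact-open topology. Neither step is technically difficult, but the argument needs to be organized so that the upper-triangular shape and the topological identification of every entry are both justified simultaneously.
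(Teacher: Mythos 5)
Your proposal is correct and follows essentially the same route as the paper: split off $K$ first using $\Hom(K,\R^n\oplus\Z^m)=\0$, then split $\R^n\oplus\Z^m$ using $\Hom(\R^n,\Z^m)=\0$, and identify the entries via $\Hom(\Z^m,H)\iso H^m$ and Remark~\ref{endRn}. Your explicit justification of the topological identification $\Hom(\Z^m,H)\iso H^m$ in the compact-open topology is slightly more careful than the paper, which dismisses it as elementary.
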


\begin{proof}
	(a) The only compact subgroup of $\R^n\oplus\Z^m$ is the trivial one, and so $\Hom(K,\R^n\oplus\Z^m) = \0$. Thus, by Proposition~\ref{easytop}, we have that
	\[
		\Aut(L) \iso \begin{pmatrix}
			\Aut(K) & \Hom(\R^n\oplus\Z^m,K)\\
			\0 & \Aut(\R^n\oplus\Z^m)
		\end{pmatrix}.
	\]
	The only connected subgroup of $\Z^m$ is trivial, so $\Hom(\R^n,\Z^m) = \0$, and so
	\[
		\Aut(\R^n\oplus\Z^m) = \begin{pmatrix}
							\Aut(\R^n) & \Hom(\Z^m,\R^n)\\
							\0 & \Aut(\Z^m)
						    \end{pmatrix}.
	\]
	One can easily show that $\Hom(\R^n\oplus\Z^m,K) \iso \Hom(\R^n,K)\times\Hom(\Z^m,K)$. $\Aut(\R^n) = \GL_n(\R)$ by Remark \ref{endRn}, $\Aut(\Z^m) = \GL_m(\Z)$ by \cite[26.18(g)]{HR}, and it is elementary that $\Hom(\Z^m,G) \iso G^m$ for any topological group $G$. Hence,
	\[
		\Aut(L) \iso \begin{pmatrix}
					\Aut(K) & \Hom(\R^n,K) & \Hom(\Z^m,K) \\
					\0 & \Aut(\R^n) & \Hom(\Z^m,\R^n) \\
					\0 & \0 & \Aut(\Z)\\
				 \end{pmatrix} \iso 
				 \begin{pmatrix}
					\Aut(K) & \Hom(\R^n,K) & K^m \\
					\0 & \GL_n(\R) & \R^{mn} \\
					\0 & \0 & \GL_m(\Z)\\
				 \end{pmatrix}.
	\]
	
	(b) The only compact subgroup of $\R^n$ is the trivial one, and so $\Hom(K,\R^n) = \0$. The remainder of the result follows from Proposition~\ref{easytop}.
\end{proof}

A few additional results of this flavour are found in \cite[\S 25]{S06}.\\

Fix an LCA group $L = L_1\oplus\R^n$, where $L_1$ contains a compact-open subgroup. 
Corollaries~\ref{autinverse} and~\ref{lcadet} imply that 
\begin{align}
\label{eq:AutL}
\Aut(L) \iso \begin{pmatrix}\Aut(L_1) & \Hom(\R^n,L_1)\\\Hom(L_1,\R^n) & \GL_n(\R)\end{pmatrix}
\end{align}
as (abstract) groups. Theorem \ref{AutLdecomp} is established once we show that this isomorphism is topological, a result that follows from the equivalence of (i) and (iv) in Theorem \ref{AutLequivs} below.

\begin{thm}\label{AutLequivs}
Let $(\phi_\lambda)$ be a net in $\Aut(L)$, and $\phi\in\Aut(L)$. The following statements are equivalent:

	\begin{enumerate}[label=\rm({\roman*})]
		\item $\phi_\lambda \tog \phi$ in $\Aut(L)$;
		
		\item $\phi_\lambda \toCO \phi$ and $\det(\phi_\lambda)\tog\det(\phi)$;
		
		\item $\phi_\lambda \toCO \phi$ and $(\det(\phi_\lambda))^{-1} \toCO (\det(\phi))^{-1}$;
		
		\item $\phi_\lambda \toCO \phi$ and $\alpha_\lambda\tog\alpha$;
		
		\item $\phi_\lambda \toCO \phi$ and $\alpha_\lambda^{-1} \toCO \alpha^{-1}$.
	\end{enumerate}
\end{thm}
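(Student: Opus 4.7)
The plan is to use Proposition \ref{gconv} to unpack each $g$-convergence into a pair of compact-open convergences, Proposition \ref{endmattop} to reduce compact-open convergence of matrices to entrywise compact-open convergence, continuity of composition (Proposition \ref{cocomp}) to push compact-open convergence through arithmetic on entries, and the explicit formulas of Corollary \ref{autinverse} together with equations (\ref{alphainv}) and (\ref{detinv}) to bridge between $\phi^{-1}$, $\alpha^{-1}$, and $\det(\phi)^{-1}$.

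The key preliminary observation I would establish first is that if $\phi_\lambda \toCO \phi$, then automatically $\delta_\lambda^{-1} \toCO \delta^{-1}$ and $\det(\phi_\lambda) \toCO \det(\phi)$. Indeed, $\delta_\lambda \toCO \delta$ in $\End(\R^n) = M_n(\R)$ by Remark \ref{endRn}, $\delta \in \GL_n(\R)$ by Corollary \ref{lcadet}, and $\GL_n(\R)$ is an open subset of $M_n(\R)$ on which inversion is continuous; then continuity of composition and addition applied to $\alpha_\lambda - \beta_\lambda\delta_\lambda^{-1}\gamma_\lambda$ yields the second claim.

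With this in hand, several equivalences are immediate. Since $\det(\phi_\lambda) \toCO \det(\phi)$ is free from $\phi_\lambda \toCO \phi$, Proposition \ref{gconv} applied in $\Aut(L_1)$ shows that $\det(\phi_\lambda) \tog \det(\phi)$ collapses to $\det(\phi_\lambda)^{-1} \toCO \det(\phi)^{-1}$, giving (ii) $\Leftrightarrow$ (iii). The same argument, with $\alpha_\lambda \toCO \alpha$ supplied directly by Proposition \ref{endmattop}, gives (iv) $\Leftrightarrow$ (v). For (iii) $\Leftrightarrow$ (v), the identities (\ref{alphainv}) and (\ref{detinv}) express $\alpha_\lambda^{-1}$ and $\det(\phi_\lambda)^{-1}$ as polynomial expressions in the other entries and each other, so each direction follows from continuity of composition and addition.

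The remaining equivalence (i) $\Leftrightarrow$ (iii) is the heart of the theorem and where I expect the main obstacle. Proposition \ref{gconv} translates (i) into $\phi_\lambda \toCO \phi$ together with $\phi_\lambda^{-1} \toCO \phi^{-1}$, and by Proposition \ref{endmattop} the latter is entrywise compact-open convergence of the four blocks in Corollary \ref{autinverse}'s formula for $\phi^{-1}$. The $(1,1)$ block is exactly $\det(\phi)^{-1}$, so (i) $\Rightarrow$ (iii) is automatic; conversely, given (iii) and the preliminary observation, continuity of composition applied blockwise to Corollary \ref{autinverse} yields $\phi_\lambda^{-1} \toCO \phi^{-1}$, hence (i). The delicate part is simply the book-keeping of which compact-open convergences are free versus which require an extra hypothesis; the linchpin throughout is the openness of $\GL_n(\R)$ in $M_n(\R)$, which is what licenses the free use of $\delta_\lambda^{-1}$ in all of the explicit inversion formulas.
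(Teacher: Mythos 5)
Your proposal is correct and follows essentially the same route as the paper: the same preliminary facts (that $\phi_\lambda\toCO\phi$ already forces $\delta_\lambda^{-1}\toCO\delta^{-1}$ and $\det(\phi_\lambda)\toCO\det(\phi)$), the same use of Proposition~\ref{gconv}, Proposition~\ref{cocomp}, Corollary~\ref{autinverse}, and the identities (\ref{alphainv}) and (\ref{detinv}), with only a cosmetic reorganization of which implications are proved directly. The paper cycles (i)$\Rightarrow$(ii)$\Rightarrow$(iii)$\Rightarrow$(i) and (iii)$\Rightarrow$(iv)$\Rightarrow$(v)$\Rightarrow$(iii), whereas you group the statements into pairwise equivalences, but the underlying computations are identical.
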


\begin{proof}
Throughout the proof, we identify automorphisms in $\Aut(L)$ with their 
matrix representations as provided in (\ref{eq:AutL}), and use the 
following convention to denote components:
$\phi_\lambda = \begin{pmatrix}\alpha_\lambda&\beta_\lambda\\\gamma_\lambda&\delta_\lambda\end{pmatrix}$ and $\phi= \begin{pmatrix}\alpha&\beta\\\gamma&\delta\end{pmatrix}$. Furthermore, by Remark \ref{endRn}, the compact-open and $g$-topologies coincide on $\Aut(\R^n)$. So $\delta_\lambda\to\delta$ if and only if $\delta_\lambda^{-1}\to\delta^{-1}$, and given one, we need not verify the other.

(i)$\Longrightarrow$(ii): 
Since the $g$-topology is finer than the 
compact-open one, it follows that $\phi_\lambda\toCO\phi$. By 
Proposition~\ref{gconv} applied to $\Aut(L_1)$, it suffices to show that 
$\det(\phi_\lambda)\toCO\det(A)$ and \\
$\det(\phi_\lambda)^{-1}\toCO\det(\phi)^{-1}$. Since $\alpha_\lambda\toCO\alpha$, $\beta_\lambda\to\beta$, $\gamma_\lambda\to\gamma$ and $\delta_\lambda\to\delta$, where each of the spaces involved carries the compact-open topology, it follows by Proposition \ref{cocomp} that
	\[
		\det(\phi_\lambda) = \alpha_\lambda-\beta_\lambda\delta_\lambda^{-1}\gamma_\lambda \toCO \alpha-\beta\delta^{-1}\gamma = \det(\phi).
	\]
	By Corollary \ref{autinverse}, 
	\begin{align*}
		\phi_\lambda^{-1} &= \begin{pmatrix}
					(\det(\phi_\lambda))^{-1} &  -(\det(\phi_\lambda))^{-1}(\beta_\lambda\delta_\lambda^{-1})\\
					-\delta_\lambda^{-1}\gamma_\lambda(\det(\phi_\lambda))^{-1} & \delta_\lambda^{-1}
				\end{pmatrix}, \text{ and }\\
		\phi^{-1} &= \begin{pmatrix}
					(\det(\phi))^{-1} &  -(\det(\phi))^{-1}(\beta\delta^{-1})\\
					-\delta^{-1}\gamma(\det(\phi))^{-1} & \delta^{-1}
				\end{pmatrix}.
	\end{align*}
Since $\phi_\lambda^{-1}\toCO\phi^{-1}$, in particular, the $(1,1)$-entry 
of $\varphi_\lambda^{-1}$ converges to the $(1,1)$-entry of 
$\varphi^{-1}$. Hence, $(\det(\phi_\lambda))^{-1}\toCO(\det(\phi))^{-1}$.

	(ii)$\Longrightarrow$(iii) follows from  Proposition \ref{gconv} 
applied to $\Aut(L_1)$.
	
	(iii)$\Longrightarrow$(i): Since $\phi_\lambda\toCO\phi$, by Proposition \ref{gconv}, it suffices to show that $\phi_\lambda^{-1}\toCO\phi^{-1}$. We know that $(\det(\phi_\lambda))^{-1} \toCO (\det(\phi))^{-1}$, so
by  Proposition~\ref{cocomp},
	\begin{align*}
		-(\det(\phi_\lambda))^{-1}(\beta_\lambda\delta_\lambda^{-1}) &\to -(\det(\phi))^{-1}(\beta\delta^{-1}) \text{ in $\Hom(\R^n,L_1)$, and}\\
		-\delta_\lambda^{-1}\gamma_\lambda(\det(\phi_\lambda))^{-1} &\to -\delta^{-1}\gamma(\det(\phi))^{-1} \text{ in $\Hom(L_1,R^n)$.}
	\end{align*}
	Thus, one has
	\[
		\begin{pmatrix}
					(\det(\phi_\lambda))^{-1} &  -(\det(\phi_\lambda))^{-1}(\beta_\lambda\delta_\lambda^{-1})\\
					-\delta_\lambda^{-1}\gamma_\lambda(\det(\phi_\lambda))^{-1} & \delta_\lambda^{-1}
				\end{pmatrix} \toCO
		\begin{pmatrix}
					(\det(\phi))^{-1} &  -(\det(\phi))^{-1}(\beta\delta^{-1})\\
					-\delta^{-1}\gamma(\det(\phi))^{-1} & \delta^{-1}
				\end{pmatrix}.
	\]
	That is, $\phi_\lambda^{-1}\toCO\phi^{-1}$, and hence, $\phi_\lambda\tog\phi$.
	
	(iii)$\Longrightarrow$(iv): Since $\phi_\lambda\toCO\phi$,
one has $\alpha_\lambda\toCO\alpha$. Thus, by
Proposition~\ref{gconv}, it suffices  to show that 
$\alpha_\lambda^{-1}\to\alpha^{-1}$. By (\ref{alphainv}),
	\begin{align*}
		\alpha_\lambda^{-1} &= (\det(\phi_\lambda))^{-1}(\1_{B}-\beta_\lambda\delta_\lambda^{-1}\gamma_\lambda(\det(\phi_\lambda))^{-1})\\
		\alpha^{-1} &= (\det(\phi))^{-1}(\1_{B}-\beta\delta^{-1}\gamma(\det(\phi))^{-1}).
	\end{align*}
Therefore, by Proposition~\ref{cocomp}, 
	\[
		\alpha_\lambda^{-1} = (\det(\phi_\lambda))^{-1}(\1_{B}-\beta_\lambda\delta_\lambda^{-1}\gamma_\lambda(\det(\phi_\lambda))^{-1}) \toCO  (\det(\phi))^{-1}(\1_{B}-\beta\delta^{-1}\gamma(\det(\phi))^{-1}) = \alpha^{-1}.
	\]
	
(iv)$\Longrightarrow$(v) follows from Proposition~\ref{gconv} applied to 
$\Aut(L_1)$.
	
	(v)$\Longrightarrow$(iii): By~(\ref{detinv}),
	\begin{align*}
		(\det(\phi_\lambda))^{-1} &= \alpha_\lambda^{-1}(\1_{B} + \beta_\lambda\delta_\lambda^{-1}\gamma_\lambda\alpha_\lambda^{-1})\\
		(\det(\phi))^{-1} &= \alpha^{-1}(\1_{B} + \beta\delta^{-1}\gamma\alpha^{-1})
	\end{align*}
	Therefore, by Proposition~\ref{cocomp},
	\[
	(\det(\phi_\lambda))^{-1} = \alpha_\lambda^{-1}(\1_{B} + \beta_\lambda\delta_\lambda^{-1}\gamma_\lambda\alpha_\lambda^{-1})\toCO\alpha^{-1}(\1_{B} + \beta\delta^{-1}\gamma\alpha^{-1}) = (\det(\phi))^{-1}.
	\]
	This establishes the remaining equivalence.
\end{proof}

We remark that the previous theorem has a striking similarity to the purely algebraic Theorem~\ref{catthm}. In both cases, we have reduced a question regarding elements of 
$\Aut(L)$ to a question regarding only its diagonal components, utilizing the quasi-determinant as an intermediate step. Also, observe that (i)$\Longrightarrow$(ii) in Theorem \ref{AutLequivs} implies that the quasi-determinant $\det:\Aut(L)\to\Aut(L_1)$ is continuous.

\section*{Acknowledgments}

This research was conducted as part of an NSERC Undergraduate Summer Research Award under the supervision of G\'{a}bor Luk\'{a}cs. I thank Dr. Luk\'{a}cs for his wisdom, guidance, attention to detail, and understanding; without him, this work would simply have not been possible.

I would also like to thank Karen Kipper for carefully proof-reading this paper for grammar and spelling.


\begin{thebibliography}{9}

\bibitem{A46}
Arens, Richard. Topologies for homeomorphism groups. Amer. J. Math. \textbf{68} (4) (1946), 593-610.

\bibitem{A81}
Armacost, David L. \emph{The structure of locally compact abelian groups}, Marcel Dekker Inc., New York, 1981.

\bibitem{AA78}
Armacost D. L. and Armacost W. L. Uniqueness in structure theorems for lca groups. Can. J. Math. \textbf{30} (3) (1978), 593-599.

\bibitem{B48}
Braconnier, J. Sur les groupes topologiques localement compacts, J. Math. Pures Appl. \textbf{27} (9)
(1948), 1-85.

\bibitem{CM11}
Caprace, P. and Monod, N. Decomposing locally compact groups into simple pieces. Math. Proc. Cambridge Philos. Soc. \textbf{150} (2011), 97-128.

\bibitem{E89}
Engelking, Ryszard. \emph{General topology}, Sigma Series in Pure Math., 6, Heldermann Verlag, Berlin, 2e, 1989. Translated from Polish by the author.

\bibitem{F45}
Fox, Ralph H. On topologies for function spaces. Bull. Amer. Math. Soc. \textbf{51} (1945), 429-432.


\bibitem{GP57}
Gleason, A. M. and Palais, R. S. On a class of transformation groups. Amer. J. Math. \textbf{79} (3) (1957), 631-648.

\bibitem{HR}
Hewitt, E. and Ross, K. A. \emph{Abstract harmonic analysis Vol. I}. Springer, Berlin, 1963.

\bibitem{L71}
Levin, Martin D. The automorphism group of a locally compact abelian group. Acta. Math. \textbf{127} (1971), 259-278.

\bibitem{L09}
Luk\'{a}cs, G\'{a}bor. \emph{Compact-like topological groups}. Research and Exposition in Math., 31, Heldermann Verlag, Berlin, 2009.


\bibitem{LW68}
Lee, D. H. and Wu, T.-S. The group of automorphisms of a finite-dimensional topological group. \textbf{15} (3) (1968), 321-324.

\bibitem{M98}
Mac Lean, Saunders. \emph{Categories for the working mathematician}. Graduate Texts in Mathematics. Springer, New York, 2e, 1998.

\bibitem{MS11}
Megrelishvili, M. and Shlossberg, M. Notes on non-archimedean topological groups. Topology Appl. To appear. (2011)


\bibitem{M76}
Mel'nikov, O. V. Compactness conditions for groups of automorphisms of topological groups. Matematicheskie Zametki. \textbf{19} (5) (1976) 735-743.

\bibitem{M80}
Moskalenko, Z. I. Automorphism groups of compact, totally disconnected, nilpotent groups. Ukrainskii Matematicheskii Zhurnal. \text{32} (1) (1980), 46Ð52.

\bibitem{S06}
Stroppel, Markus. \emph{Locally compact groups}. EMS Textbooks in Mathematics. European Mathematical Society, Zurich, 2006.

\bibitem{W70}
Willard, Stephen. \emph{General topology}. Addison-Wesley, Reading, Mass., 1970. (reprinted by Dover)

\end{thebibliography}
\end{document}